\newtheorem{theorem}{Theorem}[section]
\newtheorem{lemma}[theorem]{Lemma}
\newtheorem*{Acknowledgement}{\textnormal{\textbf{Acknowledgement}}}
\newtheorem{proposition}[theorem]{Proposition}
\newtheorem{corollary}[theorem]{Corollary}
\theoremstyle{definition}
\newtheorem{definition}[theorem]{Definition}
\newtheorem{example}[theorem]{Example}
\newtheorem{Open Prob}[theorem]{Open Problem}
\newtheorem{remark}[theorem]{Remark}
\numberwithin{equation}{section}
\def\DJ{\leavevmode\setbox0=\hbox{D}\kern0pt\rlap{\kern.04em\raise.188\ht0\hbox{-}}D}
\begin{document}
	
	\title[M Compactness in Normed Linear Spaces]{Characterization of $M$-compact sets via statistically convergent sequences}
	
	\author[S.\ Seal, S.\ Som, S.\ Basu, L.K.\ Dey]
	{Susmita Seal$ ^{1}$, Sumit Som$^{2}$, Sudeshna Basu$^{3}$, Lakshmi Kanta Dey$^{4}$}
	
	\address {{$^{1}$} Susmita Seal, 
		Department of Mathematics,
		Ram Krishna Mission Vivekananda Education and Research Institute, 
		Belur Math,  Howrah 711202,
		West Bengal, India}
	\email{susmitaseal1996@gmail.com}
	
    \address{{$^{2}$} Sumit Som,
		Department of Mathematics,
		National Institute of Technology
		Durgapur, India.}
	\email{somkakdwip@gmail.com}
	
	\address{{$^{3}$}   Sudeshna Basu,
		Department of Mathematics, 
		Ram Krishna Mission Vivekananda Education and Research Institute, 
		Belur Math,  Howrah 711202
		West Bengal, India and 
		Department of Mathematics,
		George Washington University,
		Washington DC 20052 USA
		}
	\email{sbasu@gwu.edu, sudeshnamelody@gmail.com}
	\address{{$^{4}$} Lakshmi Kanta Dey,
		Department of Mathematics,
		National Institute of Technology
		Durgapur, India.}
	\email{lakshmikdey@yahoo.co.in}
	
	\keywords{ Statistical convergence, ideal, maximizing sequence, $M$-compact.\\
	\indent 2010 {\it Mathematics Subject Classification}.  46B20; 41A65; 41A50; 40A35.}
	
	\date{}
	
	\setcounter {page}{1}
	
	

\begin{abstract}
In this paper, we study stability of $M$-compactness for $l^p$ sum of Banach spaces for $1 \leq p<\infty$. We also obtain a characterization of $M$-compact sets in terms of statistically maximizing sequence, a notion which is weaker than a maximizing sequence. Moreover, we introduce the notion of  $\mathcal{I}$-$M$-compactness of a bounded subset $M$ of a normed linear space $X$ with respect to an ideal $\mathcal{I}$ and show that it is equivalent to $M$-compactness for non-trivial admissible ideals.
\end{abstract}
	
\maketitle 
	
\section{Introduction}
Let $X$ be a real normed linear space and $G$ be a nonempty, bounded subset of $X$. For any $x\in X$, the farthest distance from $x$ to the set $G$ is denoted by $\delta(x,G)$ and is defined by $$\delta(x,G)=~\mbox{sup}\Big\{\Vert x-e \Vert : e\in G\Big\}.$$  Throughout our discussion, we consider only bounded subsets of $X$ only.
We recall the following two definitions: 

\begin{definition}\cite{sain}
Let $X$ be a real normed linear space and $M$ be a nonempty bounded subset of $X$. A sequence $\{x_{n}\}_{n\in \mathbb{N}}$ in $X$ is said to be maximizing if there exists $x\in X$ such that  $\Vert x_{n} - x \Vert \rightarrow \delta(x,M)$ as $n\rightarrow \infty.$
\end{definition}

\begin{definition}\cite{sain}
Let $S\subseteq X$. Then $S$ is said to be $M$-compact if every maximizing sequence in $M$ has a convergent subsequence.
\end{definition}

In this paper we study certain stability results for $M$-compact sets in Banach spaces. $M$-compact sets were first introduced and subsequently studied by Vlasov \cite{v}. It is not difficult to show that $M$-compactness is a proper generalization of the usual compactness and $M$-compact sets may not be closed. $M$-compact sets have been widely studied in the context of farthest distance maps \cite{sain}. Recently, it was proved that an uniquely remotal $M$-compact set with compact derived set is a singleton\cite{sain}. 

In this work, we prove the stability of $M$-compact sets with respect to $\ell_{p}$ sum for $ 1 \leq p <\infty$. We also give a characterization of $M$-compact sets in terms of statistically maximizing sequence in a normed linear space introduced in \cite{sdb}. We prove the stability of $M$-compact sets  under $\ell_{p}$ sum. We introduce a new notion of convergence of sequences in a normed linear space, namely, $\mathcal{I}$-convergence and use it to define $\mathcal{I}$-maximizing sequence and $\mathcal{I}$-$M$ compactness and explore its relationship to $M$-compactness. Statistical convergence for real numbers  was introduced in \cite{fas} and subsequently studied in great details in \cite{ma,s}. Statistical limit points were studied extensively in \cite{con1, con2, fri1, fri2, kos}. It was also studied in context of measure theory in \cite{mil}. In \cite{dum}, statistical convergence was studied in the context of approximation theory and in \cite{con3}, it was studied in the context of Banach Spaces. 
The notion of $\mathcal{I}$-convergence for real numbers was extensively studied in \cite{pav, pav1} in an attempt to generalize the notion statistical convergence for real numbers.

\section{Stability of $M$-compactness under $\ell_p$-sum } 

\begin{lemma}\label{lpsum}
Let $X$ and $Y$ be two normed linear spaces and $Z$ = $X$ $\oplus_{p}$ $Y$($1 \leqslant p < \infty$). Let $ M\subseteq X$ and $N\subseteq Y$ be bounded subsets of $X$ and $Y$ respectively. Then $z_{n}=  (x_{n},y_{n}) \in M\oplus_{p}N$ is a maximizing sequence in $M\oplus_{p}N$  if and only if $(x_{n})$ and $(y_{n})$ are maximizing sequences in $M$ and $N$ respectively. 
\end{lemma}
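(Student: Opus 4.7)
The plan is to reduce the statement to a simple real-analysis squeeze argument by exploiting how the $\ell_p$-norm splits across coordinates when combined with a supremum.

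First I would establish the key numerical identity
\[
\delta\bigl((x,y),\, M\oplus_p N\bigr)^{p} \;=\; \delta(x,M)^{p} + \delta(y,N)^{p},
\]
for any $(x,y)\in X\oplus_p Y$. This follows because
\[
\delta\bigl((x,y),M\oplus_p N\bigr)^p \;=\; \sup_{(a,b)\in M\times N}\bigl(\|x-a\|^p + \|y-b\|^p\bigr),
\]
and the supremum of a sum of non-negative quantities depending on independent variables splits as a sum of suprema. This identity is the workhorse of both directions.

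For the forward direction, suppose $(x_n,y_n)$ is maximizing in $M\oplus_p N$ with associated point $(x,y)\in Z$. By definition,
\[
\|x_n-x\|^p + \|y_n-y\|^p \;\longrightarrow\; \delta(x,M)^p + \delta(y,N)^p.
\]
Since $x_n\in M$ and $y_n\in N$, the inequalities $\|x_n-x\|^p \le \delta(x,M)^p$ and $\|y_n-y\|^p \le \delta(y,N)^p$ hold termwise. Setting $a_n := \delta(x,M)^p - \|x_n-x\|^p \ge 0$ and $b_n := \delta(y,N)^p - \|y_n-y\|^p \ge 0$, the convergence above says $a_n+b_n\to 0$, whence $a_n,b_n\to 0$ individually. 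Taking $p$-th roots gives $\|x_n-x\|\to\delta(x,M)$ and $\|y_n-y\|\to\delta(y,N)$, so $(x_n)$ and $(y_n)$ are maximizing in $M$ and $N$ with witnesses $x$ and $y$ respectively.

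For the reverse direction, assume $(x_n)$ is maximizing in $M$ via some $x\in X$ and $(y_n)$ is maximizing in $N$ via some $y\in Y$. Adding the convergences $\|x_n-x\|^p\to\delta(x,M)^p$ and $\|y_n-y\|^p\to\delta(y,N)^p$, then taking $p$-th roots and invoking the identity above shows that $\|(x_n,y_n)-(x,y)\|_p \to \delta((x,y),M\oplus_p N)$, i.e.\ $(x_n,y_n)$ is maximizing in $M\oplus_p N$ with witness $(x,y)$.

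I do not anticipate a serious obstacle: the only subtlety is the squeeze step in the forward direction, which relies crucially on $x_n\in M$ and $y_n\in N$ (so each individual term is dominated by its supremum), and on the fact that the $p$-th power of the $\ell_p$-norm linearises the sum so that sup and sum interchange. The argument works uniformly for all $1\le p<\infty$; no case distinction between $p=1$ and $p>1$ is needed.
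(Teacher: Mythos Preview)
Your proof is correct and follows essentially the same route as the paper: both directions hinge on the identity $\delta((x,y),M\oplus_p N)^p = \delta(x,M)^p + \delta(y,N)^p$, and the forward direction uses the same squeeze (if $a_n,b_n\ge 0$ and $a_n+b_n\to 0$ then $a_n,b_n\to 0$). Your passage from $\|x_n-x\|^p\to\delta(x,M)^p$ to $\|x_n-x\|\to\delta(x,M)$ via continuity of $t\mapsto t^{1/p}$ is in fact cleaner than the paper's, which writes out the factorisation $a^p-b^p=(a-b)(a^{p-1}+\cdots+b^{p-1})$ and so, as stated, only covers integer $p$.
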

\begin{proof}
Firstly, let $\{z_{n}\}_{n\in \mathbb{N}}$ be a maximizing sequence in $M\oplus_{p}N$.
Then there exists $ z\in X\oplus_{p} Y$ such that as $ n\rightarrow \infty,$
\begin{eqnarray*}
	\| z_n-z \|_{p} & \rightarrow & \delta(z,M\oplus_{p}N); \\
	\Longrightarrow \| (x_n,y_n)-(x,y) \|_{p} & \rightarrow &\delta((x,y),M\oplus_{p}N)=  (\delta(x,M)^p +\delta(y,N)^p)^{1/p}; \\
	\Longrightarrow (\| x_n-x \| ^p + \| y_n-y\| ^p)^{1/p} &\rightarrow & (\delta(x,M) ^p+\delta(y,N) ^p)^{1/p}; \\
	\Longrightarrow \| x_n-x \| ^p + \| y_n-y\| ^p &\rightarrow & \delta(x,M) ^p+\delta(y,N) ^p); \\
	\Longrightarrow (\delta(x,M)^p-\| x_n-x \| ^p)&+&(\delta(y,N)^p-\| y_n-y \| ^p)  \rightarrow 0. \\
	\mbox{Since,} \quad  \delta(x,M)^p-\| x_n-x \| ^p \geqslant 0 & \mbox{and} & \delta(y,N)^p-\Vert y_n-y \Vert ^p \geqslant 0 \quad  \forall~ n, \\
	\mbox{we have,}~  \delta(x,M)^p-\| x_n-x \|^p \rightarrow 0 & and & \delta(y,N)^p-\| y_n-y \| ^p \rightarrow 0. 
\end{eqnarray*}

Claim: $\| x_n - x \| \rightarrow \delta(x,M)$ as $n\rightarrow \infty.$

If $\delta(x,M)$ = 0 then  $\| x_n - x \| = 0 = \delta(x,M)~ \mbox{for all } n \in \mathbb{N}.$ 

If $\delta(x,M) \neq 0,$ choose $0< b < \delta(x,M).$ Then there exists  $N_0 \in \mathbb{N}$ such that $\delta(x,M) ^p \geq \| x_n - x \| ^p > b^p$  $\forall~ n\geq N_0 $ i.e. $\delta(x,M)  \geq \| x_n - x \|  > b$  $  \forall ~n\geq N_0.$

Thus, $\forall n \geq N_0,$ we have,
\begin{eqnarray*}
	0 & \leq & \delta(x,M) - \| x_n - x \| \\
	& = &  \frac{ \delta(x,M)^p - \| x_n - x \| ^p }{\delta(x,M)^{p-1} + \delta(x,M)^{p-2} \| x_n - x \| +\dots+ \| x_n - x \| ^{p-1}  }\\
	& \leq & \frac{\delta(x,M)^p - \|x_n - x \| ^p}{b^{p-1} p}\rightarrow  0 \quad \mbox{as}\quad  n\rightarrow \infty.
\end{eqnarray*}
Hence $\| x_n - x \| \rightarrow \delta(x,M).$ Similarly, $\Vert y_n - y \Vert \rightarrow \delta(y,N)~\mbox{as}~n\rightarrow \infty.$

Conversely, let $\{x_{n}\}_{n\in \mathbb{N}}$ and $\{y_{n}\}_{n\in \mathbb{N}}$ be  maximizing sequences in $M$ and $N$ respectively. Then there exist $x\in X$ and $y \in Y$  such that  $\| x_n-x \|  \rightarrow  \delta(x,M)$ and  $\| y_n-y \|  \rightarrow  \delta(y,N)$ as $n\rightarrow \infty.$ Thus 
$$\| x_n-x \| ^p + \| y_n-y \| ^p \rightarrow  \delta(x,M)^p + \delta(y,N)^p~ \mbox{as}~ n\rightarrow \infty,$$
i.e. $\| (x_n,y_n)-(x,y) \|_{p} ^p \rightarrow   \delta(x,M)^p+\delta(y,N)^p  =\delta((x,y),M\oplus_{p}N)^p~ \mbox{as}~ n\rightarrow \infty.$
Hence, 
$$\| (x_n,y_n)-(x,y) \|_{p}  \rightarrow \delta((x,y),M \oplus_{p} N)~ \mbox{as}~ n\rightarrow \infty.$$
\end{proof}
 
\begin{remark} \label{n max}
Arguing similarly, we have, suppose $X = \oplus_{l^p} X_i$ and $M_i \subset X_i$ for all $i\in \mathbb{N}.$ Let 
 $\{x_n\}_{n\in \mathbb{N}}$ be a maximizing sequence in $\oplus_{l^p} M_i.$ Then the $i$-th co-ordinate sequence $\{x^{(i)}_n\}_{n\in \mathbb{N}}$ is maximizing in $M_i$ for all $i\in \mathbb{N}$ ($1\leqslant p < \infty$). 
\end{remark}

\begin{proposition}\label{aa}
Let $X$ and $Y$ be two normed linear spaces and $Z$ = $X$ $\oplus_{p}$ $Y$. Then $H\subset X$ and $N\subset Y$ are $M$-compact if and only if $H$ $\oplus_{p}$ $N$ is $M$-compact where $1 \leqslant p < \infty.$
\end{proposition}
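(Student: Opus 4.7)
The proof splits as two implications, both of which reduce quickly to Lemma \ref{lpsum} together with the elementary fact that a subsequence of a maximizing sequence is again maximizing (with respect to the same point).

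For the forward direction, I would assume $H$ and $N$ are $M$-compact and take an arbitrary maximizing sequence $z_n=(x_n,y_n)$ in $H\oplus_p N$. By Lemma \ref{lpsum}, $(x_n)$ is maximizing in $H$ and $(y_n)$ is maximizing in $N$. Apply $M$-compactness of $H$ to extract a subsequence $(x_{n_k})$ converging to some $x\in X$. The corresponding sequence $(y_{n_k})$ is a subsequence of a maximizing sequence, hence still maximizing in $N$, so $M$-compactness of $N$ yields a further subsequence $(y_{n_{k_j}})$ converging to some $y\in Y$. Then $(x_{n_{k_j}},y_{n_{k_j}})\to(x,y)$ in $X\oplus_p Y$, giving a convergent subsequence of $(z_n)$.

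For the converse, I would assume $H\oplus_p N$ is $M$-compact and prove $M$-compactness of $H$ (the argument for $N$ being symmetric). Given a maximizing sequence $(x_n)$ in $H$ associated to a point $x^\ast\in X$, pick any point $y^\ast\in Y$ and, using the definition of $\delta(y^\ast,N)$ as a supremum, choose a sequence $(y_n)$ in $N$ with $\|y_n-y^\ast\|\to\delta(y^\ast,N)$; this is a maximizing sequence in $N$ (and exists because $N$ is nonempty and bounded). By the converse part of Lemma \ref{lpsum}, $((x_n,y_n))$ is a maximizing sequence in $H\oplus_p N$ with respect to $(x^\ast,y^\ast)$, so by hypothesis it admits a convergent subsequence $(x_{n_k},y_{n_k})\to(x',y')$. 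Since convergence in $\oplus_p$ is coordinatewise, $x_{n_k}\to x'$ in $H$, which is what we need.

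The argument is essentially just unpacking Lemma \ref{lpsum}, so there is no genuine obstacle; the only point requiring a line of care is the reverse direction, where one must produce a maximizing companion sequence in $N$ in order to lift the problem into $H\oplus_p N$ and then apply the lemma backwards. A minor housekeeping remark is that the nonemptiness of $H$ and $N$ (implicit in the standing assumption that the sets we consider are nonempty bounded) is needed for this construction; if either summand is empty, $M$-compactness is vacuous on both sides and the statement holds trivially.
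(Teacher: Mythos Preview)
Your proof is correct and follows essentially the same route as the paper: both directions are reduced to Lemma~\ref{lpsum}, with the forward direction handled by two successive subsequence extractions and the converse handled by manufacturing a companion maximizing sequence in $N$ so as to lift the problem to $H\oplus_p N$. The only cosmetic difference is that the paper makes explicit the boundedness of $H$ and $N$ (inherited from that of $H\oplus_p N$) before invoking the existence of $\delta(y,N)$, whereas you fold this into the standing nonempty-bounded hypothesis.
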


\begin{proof}
Firstly, let $H$ and $N$ be $M$-compact subsets in $X$ and $Y$ respectively. Let $\{z_{n}\}_{n\in \mathbb{N}}$ be a maximizing sequence in $H$ $\oplus_{p}$ $N$. Let  $z_{n}=(x_{n},y_{n}),~ x_{n} \in H,~ y_{n}\in N~\mbox{for all}~n\in \mathbb{N}.$ Then by Lemma \ref{lpsum}, $\{x_{n}\}_{n\in \mathbb{N}}$ and $\{y_{n}\}_{n\in \mathbb{N}}$ are maximizing sequences in $H$ and  $N$ respectively.
Since $H$ is $M$-compact, $\{x_{n}\}_{n\in \mathbb{N}}$ has a convergent subsequence $\{ x_{n_k} \}_{k\in \mathbb{N}}.$ Since, $\| y_{n_k}-y \| \rightarrow \delta(y,N)~\mbox{as}~k\rightarrow \infty,$ so, $\{y_{n_k}\}_{k\in \mathbb{N}}$ is a maximizing sequence in $N$.
Since $N$ is $M$-compact, so, $\{y_{n_k}\}_{k\in \mathbb{N}}$ has a convergent subsequence $\{y_{n_{k_l}}\}_{l\in \mathbb{N}}.$ It follows that, $\{(x_{n_{k_l}},y_{n_{k_l}})\}_{l\in \mathbb{N}}$ is a convergent subsequence of $\{(x_{n},y_{n})\}_{n\in \mathbb{N}}.$ Thus $H \oplus_{p} N $  is $M$-compact.
 
Conversely, let $H \oplus_{p} N$ be $M$-compact. So, $H \oplus_{p} N$ is bounded and consequently, $H$ and $N$ both are bounded subsets of $X$ and $Y$ respectively. Let $\{x_{n}\}_{n\in \mathbb{N}}$ be a maximizing sequence in $H$. Then there exists $ x \in X$ such that $\| x_n-x \|  \rightarrow  \delta(x,H)$ as $n\rightarrow \infty.$ 
Since, $N$ is bounded, so, for fix  $y \in Y,~ \delta(y,N)$ exists. So, there exists a sequence $\{y_{n}\}_{n\in \mathbb{N}} \in N$ such that  $\| y_n-y \|  \rightarrow  \delta(y,N)$ as $n\rightarrow \infty.$ 
Hence by Lemma \ref{lpsum},~$z_{n}=(x_{n} , y_{n}) $ is a maximizing sequence in $H\oplus_{p} N$ and   $\| (x_n,y_n)-(x,y) \|_{p}  \rightarrow \delta\Big((x,y),H \oplus_{p} N\Big)$ as $n\rightarrow \infty.$
Since $H\oplus_{p} $ N is $M$-compact, $\{(x_{n},y_{n})\}_{n\in \mathbb{N}}$ has a convergent subsequence $\{(x_{n_k} y_{n_k})\}_{k\in \mathbb{N}}.$ It follows that, $\{x_{n_k}\}_{k\in \mathbb{N}}$ is a convergent subsequence of  $\{x_{n}\}_{n\in \mathbb{N}}$.
Thus $H$ is $M$-compact. Similarly, $N$ is $M$-compact.
\end{proof}

Our next example shows that Proposition $\ref{aa}$ is not true for p = $\infty$. 

\begin{example}
In $\mathbb{R},$ let $A = [-5,5] , B= [-1,1] \setminus  \{0 \}.$ Then $A$ and $B$ are both $M$-compact. But $A \oplus _{\infty} B$ is not $M$-compact. Indeed, let $\{(5- \frac{1}{n},\frac{1}{n})\}_{n\in \mathbb{N}}$ be  a sequence in $A \oplus _{\infty} B.$ Then 
$\| (5- \frac{1}{n},\frac{1}{n}) -0 \|  \rightarrow  \delta\Big(0,A \oplus _{\infty} B\Big)$ as $n\rightarrow \infty$ and hence, a maximizing sequence. We use the fact $\delta\Big((x,y),M\oplus_{\infty}N\Big)= \mbox{max}\Big \{ \delta(x,M) , \delta(y,N)\Big \}.$ But, $\{(5- \frac{1}{n},\frac{1}{n})\}_{n\in \mathbb{N}}$ has no convergent subsequence in $A \oplus _{\infty} B$. Consequently, $A \oplus _{\infty} B$ is not $M$-compact.
\end{example}

\section{\textbf{ $M$-compact sets and Statistically Convergent Sequences  }}


\begin{definition}\cite{s}
A sequence $\{x_{n}\}_{n\in \mathbb{N}}$ in a normed linear space $X$ is said to be statistically convergent to $x\in X$ if for each $\varepsilon>0,$ 
$$\displaystyle{\lim_{n\rightarrow \infty}}\frac{1}{n} \Big |\Big\{k\leq n : \Vert x_{k}-x \Vert \geq \varepsilon\Big\} \Big |=0.$$ 
If a sequence $\{x_{n}\}_{n\in \mathbb{N}}$ statistically converges to $x,$ then we denote it as, stat-lim $x_n=x.$
\end{definition}

We recall the following definition from \cite{sdb}.

\begin{definition}\cite{sdb}
Let $X$ be a real normed linear space and $M$ be a nonempty, bounded subset of X. A sequence $\{x_n\}_{n\in \mathbb{N}}\subset M$ is said to be statistically maximizing if there exists $x\in X$ such that $\Big\{\Vert x_n-x \Vert\Big\}_{n \in \mathbb{N}}$ is statistically convergent to $\delta(x,M)=\mbox{sup}\Big\{\Vert x-y \Vert : y\in M\Big\}$ as $n\rightarrow \infty.$
\end{definition}

It was proved in \cite {sdb} that a maximizing sequence is statistically maximizing. But the converse is not true. We quote the example from \cite{sdb} for completeness.
\begin{example}
	Let $\mathbb{R}$ be the set of all real numbers. Let $M= [-1,1].$ Now 
	$$\delta(0,M)=\mbox{sup}\Big\{\vert x \vert : x\in M\Big\}=1.$$
	Let  $\{x_{n}\}_{n\in \mathbb{N}}$ in $[-1,1]$  be defined as
	\begin{equation*}
	~x_{n}=
	\begin{cases}
	0  ~\mbox{if} ~~ n=m^{2}~\mbox{for some}~ m\in \mathbb{N},  ~&\\
	1-\frac{1}{n} ~\mbox{if} ~n\neq m^{2}~\mbox{for all}~ m\in \mathbb{N}.
	\end{cases}
	\end{equation*}
	
Then $\{x_{n}\}_{n\in \mathbb{N}}$ is not maximizing. Indeed, for any $x\in \mathbb{R},$ the real sequence $\Big\{|x_{n}-x|\Big\}_{n\in \mathbb{N}}$ is not convergent in $\mathbb{R}.$ But, $\{x_{n}\}_{n\in \mathbb{N}}$ is statistically convergent to $\delta(0,M)=1.$ Indeed, let $0<\varepsilon \leq 1.$ Now
	
	\begin{align}
	\frac{1}{n} \Big\vert\{k\leq n : \vert x_{k}-1 \vert \geq \varepsilon\}\Big\vert &= \frac{1}{n} \Big([\sqrt{n}]+d\Big) \nonumber\\
	&\leq \frac{\sqrt{n}}{n}+ \frac{d}{n} \nonumber
	\end{align}
	
	$$\Longrightarrow \displaystyle{\lim_{n\rightarrow \infty}}\frac{1}{n}\Big\vert\{k\leq n : \vert x_{k}-1 \vert \geq \varepsilon\}\Big\vert = 0.$$
Here $d$ is a finite positive integer. Hence, the sequence $\{x_{n}\}_{n\in \mathbb{N}}$ is statistically maximizing in $M.$ 
\end{example}

We need the following lemma and omit the easy proof.

\begin{lemma} \label{stat sum product}
Let $X$ and $Y$ be two normed linear spaces.
\begin{enumerate}
\item If $\{x_{n}\}_{n\in \mathbb{N}}$ and $\{y_{n}\}_{n\in \mathbb{N}}$ statistically converge to x and y respectively then $\{x_{n} + y_{n}\}_{n\in \mathbb{N}}$ statistically converges to x+y.

\item Let $\{x_{n}\}_{n\in \mathbb{N}}$ be a real sequence such that $0\leqslant x_n \leqslant x$ for all $n.$ Then $\{x_{n}^p\}_{n\in \mathbb{N}}$ statistically converges to $x^p$ if and only if $\{x_{n}\}_{n\in \mathbb{N}}$ statistically converges to $x$ for $1\leqslant p < \infty.$
\end{enumerate}
\end{lemma}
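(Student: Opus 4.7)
The plan is to reduce both statements to standard facts about natural density combined with uniform continuity. For part (1), I would fix $\varepsilon > 0$ and observe, via the triangle inequality, that $\|(x_k + y_k) - (x+y)\| \geq \varepsilon$ forces $\|x_k - x\| \geq \varepsilon/2$ or $\|y_k - y\| \geq \varepsilon/2$. Hence
$$\{k \leq n : \|(x_k+y_k) - (x+y)\| \geq \varepsilon\} \subseteq \{k \leq n : \|x_k - x\| \geq \varepsilon/2\} \cup \{k \leq n : \|y_k - y\| \geq \varepsilon/2\}.$$
Since the natural density is finitely subadditive and each set on the right has density zero by hypothesis, the set on the left also has density zero, which is exactly what is needed.

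For part (2), I would first dispose of the degenerate case $x = 0$, where $0 \leq x_n \leq 0$ forces $x_n \equiv 0$ and both statements are trivial. Assuming $x > 0$, the key observation is that the power map $\phi : [0,x] \to [0,x^p]$ defined by $\phi(t) = t^p$ and its inverse $s \mapsto s^{1/p}$ are continuous on compact intervals, hence uniformly continuous. I would then invoke the general principle that uniformly continuous functions preserve statistical convergence: given $\varepsilon > 0$, uniform continuity of $f$ supplies $\delta > 0$ such that $|s - t| < \delta$ implies $|f(s) - f(t)| < \varepsilon$, so that
$$\{k \leq n : |f(x_k) - f(x)| \geq \varepsilon\} \subseteq \{k \leq n : |x_k - x| \geq \delta\}$$
has density zero whenever the latter does. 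Applying this with $f = \phi$ on $[0,x]$ yields the forward implication, and with $f = \phi^{-1}$ on $[0, x^p]$ (noting $x_n^p \in [0, x^p]$ by hypothesis) yields the converse.

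No serious obstacle is anticipated; both parts are straightforward transfers of classical real-variable arguments to the language of natural density. The only subtlety worth flagging is that the hypothesis $0 \leq x_n \leq x$ is essential to keep the sequences inside compact intervals, where \emph{uniform} continuity — and not merely pointwise continuity — can legitimately be invoked. Without this boundedness one could, in principle, attempt an elementary bound in the spirit of the factorization $x^p - x_n^p = (x - x_n)(x^{p-1} + x^{p-2}x_n + \cdots + x_n^{p-1})$ already used in the proof of Lemma \ref{lpsum}, but the uniform-continuity route is cleaner and requires no case analysis on $p$.
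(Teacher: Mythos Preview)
Your proposal is correct. The paper itself omits the proof entirely, writing only ``We need the following lemma and omit the easy proof,'' so there is no argument to compare against. Your approach for (1) via the triangle inequality and finite subadditivity of density, and for (2) via uniform continuity of $t\mapsto t^p$ and its inverse on the compact interval $[0,x]$, is a clean and standard way to fill in the details; the observation that the hypothesis $0\le x_n\le x$ is what makes uniform continuity available is exactly the right point to flag.
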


\begin{proposition}\label{stat lpsum}
Let $X$ and $Y$ be two normed linear spaces and $Z = X \oplus_{p} Y (1 \leqslant p < \infty$). Let $ M\subseteq X$ and $N\subseteq Y$ be bounded subsets of $X$ and $Y$ respectively. Then $z_{n}=  (x_{n},y_{n}) \in M\oplus_{p}N$ is a statistically maximizing sequence in $M\oplus_{p}N$  if and only if $(x_{n})$ and $(y_{n})$ are statistically maximizing sequences in $M$ and $N$ respectively. 
\end{proposition}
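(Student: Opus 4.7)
The plan is to mirror the structure of Lemma \ref{lpsum}, replacing ordinary convergence by statistical convergence and invoking the two parts of Lemma \ref{stat sum product} at the steps where $p$-th powers and sums intervene. The definition of the $\oplus_p$-norm and the identity $\delta((x,y), M\oplus_p N) = (\delta(x,M)^p + \delta(y,N)^p)^{1/p}$ carry over verbatim, so the algebraic scaffolding is identical; only the mode of convergence changes.

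For the forward direction, suppose $\{z_n\}=\{(x_n,y_n)\}$ is statistically maximizing. Then there exists $(x,y) \in X\oplus_p Y$ such that $\|z_n-(x,y)\|_p$ is statistically convergent to $\delta((x,y), M\oplus_p N)$. Since $0 \leq \|z_n-(x,y)\|_p \leq \delta((x,y), M\oplus_p N)$, Lemma \ref{stat sum product}(2) lets me raise to the $p$-th power, so $\|x_n-x\|^p + \|y_n-y\|^p$ is statistically convergent to $\delta(x,M)^p + \delta(y,N)^p$. Rewriting this as
\[
\bigl(\delta(x,M)^p - \|x_n-x\|^p\bigr) + \bigl(\delta(y,N)^p - \|y_n-y\|^p\bigr) \xrightarrow{\text{stat}} 0,
\]
and observing that both summands are nonnegative for every $n$ (because $x_n \in M$ and $y_n \in N$), I would argue that each summand statistically converges to $0$ separately. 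A final application of Lemma \ref{stat sum product}(2) turns this into the statistical convergence of $\|x_n-x\|$ to $\delta(x,M)$ and of $\|y_n-y\|$ to $\delta(y,N)$, giving the statistically maximizing property of the coordinate sequences.

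For the converse, I would reverse the chain: given statistically maximizing $\{x_n\}$ and $\{y_n\}$ with anchors $x \in X$ and $y \in Y$, Lemma \ref{stat sum product}(2) promotes the statistical convergences $\|x_n-x\| \to \delta(x,M)$ and $\|y_n-y\| \to \delta(y,N)$ to statements about the $p$-th powers, Lemma \ref{stat sum product}(1) allows me to add them, and one further application of Lemma \ref{stat sum product}(2) extracts the $p$-th root. This is precisely the statement that $\|(x_n,y_n)-(x,y)\|_p$ statistically converges to $\delta((x,y), M\oplus_p N)$.

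The one step that is not merely bookkeeping is the claim that if $a_n, b_n \geq 0$ and $a_n+b_n$ statistically converges to $0$, then each of $a_n, b_n$ statistically converges to $0$. I would justify this by the set inclusion
\[
\{k \leq n : a_k \geq \varepsilon\} \subseteq \{k \leq n : a_k+b_k \geq \varepsilon\},
\]
valid for every $\varepsilon>0$ because $b_k \geq 0$, so the natural density of the left-hand set is dominated by that of the right-hand set, which tends to $0$. This is the only genuinely new ingredient beyond Lemma \ref{stat sum product}; everything else is a direct transcription of the proof of Lemma \ref{lpsum} into the statistical setting.
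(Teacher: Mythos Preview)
Your proposal is correct and follows essentially the same approach as the paper: both directions invoke Lemma~\ref{stat sum product} to pass between norms and their $p$-th powers and to add the coordinate contributions, and the crucial step---deducing that each nonnegative summand statistically tends to $0$ from the statistical convergence of their sum---is handled in both proofs by the same set inclusion $\{k\le n: a_k\ge\varepsilon\}\subseteq\{k\le n: a_k+b_k\ge\varepsilon\}$. The only difference is cosmetic: you treat the two implications in the opposite order and phrase the inclusion argument abstractly rather than in terms of the specific quantities $\delta(x_0,M)^p-\|x_k-x_0\|^p$.
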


\begin{proof}
Firstly, let $\{x_{n}\}_{n\in \mathbb{N}}$ and $\{y_{n}\}_{n\in \mathbb{N}}$ be statistically maximizing in $M$ and $N$ respectively. Then there exists $x_0 \in X$ and $y_0 \in Y$ such that stat-lim $\Vert x_n - x_0 \Vert$ = $\delta(x_0,M)$ and stat-lim $\Vert y_n - y_0 \Vert$ = $\delta(y_0 , N).$ Now, from Lemma \ref{stat sum product}, stat-lim $\Vert x_n - x_0 \Vert ^p$ + $\Vert y_n - y_0 \Vert^p$ = $\delta(x_0 , M)^p + \delta(y_0 , N)^p.$ 

$$\Longrightarrow~ \mbox{stat-lim}~ \Vert (x_n,y_n) - (x_0,y_0) \Vert_p ^p = \delta\Big((x_0 , y_0) , M \oplus _p N\Big)^p;$$  

$$\Longrightarrow~ \mbox{stat-lim}~\Vert (x_n,y_n) -(x_0,y_0) \Vert_p=\delta\Big((x_0 , y_0) , M \oplus _p N\Big).$$

Conversely, let $\{(x_{n},y_n)\}_{n\in \mathbb{N}}$ be statistically maximizing in $M \oplus_p N$. Then there exists $(x_0 ,y_0) \in X \oplus_p Y $ such that stat-lim $\Vert (x_n,y_n) - (x_0,y_0) \Vert_p $ = $\delta\Big((x_0 , y_0),M \oplus _p N\Big).$ Then by Lemma \ref{stat sum product},

$$\mbox{stat-lim}~ \Vert x_n - x_0 \Vert ^p + \Vert y_n - y_0 \Vert^{p} = \delta(x_0 , M)^p + \delta(y_0 , N)^p.$$ 

Let $\varepsilon > 0.$ Then 
$$\displaystyle{\lim_{n\rightarrow \infty}}\frac{1}{n} \Big |\Big\{k\leq n : \delta(x_0, M)^p -\Vert x_k - x_0 \Vert^p \geq \varepsilon\Big\} \Big |$$ 

$$\leqslant \displaystyle{\lim_{n\rightarrow \infty}}\frac{1}{n} \Big |\Big\{k\leq n : \delta(x_0 , M)^p -\Vert x_k - x_0 \Vert^p + \delta(y_0 , N)^p -\Vert y_k - y_0 \Vert^p \geq \varepsilon\Big\} \Big |$$  

$$=\displaystyle{\lim_{n\rightarrow \infty}}\frac{1}{n} \Big |\Big\{k\leq n : \delta(x_0 , M)^p + \delta(y_0 , N)^p-(\Vert x_k - x_0 \Vert^p  +\Vert y_k - y_0 \Vert^p) \geq \varepsilon\Big\} \Big |= 0.$$ So, stat-lim $\Vert x_n - x_0 \Vert^p = \delta(y_0,N)^p.$ Similarly, stat-lim $\Vert y_n - y_0 \Vert^p$ = $\delta(y_0 , N)^p$ . By Lemma \ref{stat sum product}, we have, stat-lim $\Vert x_n - x_0 \Vert$ = $\delta(x_0 , M)$ and stat-lim $\Vert y_n - y_0 \Vert$ = $\delta(y_0,N).$
\end{proof} 

\begin{corollary} \label{stat max}
Suppose $X=\oplus_{l^p} X_i$ and $M_i \subseteq X_i.$ Let $\{x_{n}\}_{n\in \mathbb{N}}$ be a statistically maximizing sequence in $  \oplus_{l^p} M_i.$ Then $\{x^{(i)}_n\}_{n\in \mathbb{N}}$ is statistically maximizing in $M_i$ for all $i\in \mathbb{N}$ for $1\leqslant p < \infty.$ 
\end{corollary}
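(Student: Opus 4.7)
The plan is to mimic the converse direction of Proposition \ref{stat lpsum}, upgrading the two-factor telescoping bound to a countable one by exploiting the fact that for each coordinate $j$ and each $n$, the quantity $\delta(x_0^{(j)}, M_j)^p - \|x_n^{(j)} - x_0^{(j)}\|^p$ is nonnegative. Fix $x_0 = (x_0^{(j)})_{j\in\mathbb{N}} \in X$ witnessing the statistical maximality of $\{x_n\}$ in $\oplus_{\ell^p} M_i$. Since
$$\delta\bigl(x_0, \oplus_{\ell^p} M_i\bigr)^p = \sum_{j=1}^{\infty} \delta(x_0^{(j)}, M_j)^p < \infty$$
(the set is bounded) and $\|x_n - x_0\|_p^p = \sum_{j=1}^{\infty} \|x_n^{(j)} - x_0^{(j)}\|^p < \infty$, both infinite sums are well-defined.

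First, by Lemma \ref{stat sum product}(2) applied to the real sequence $\{\|x_n - x_0\|_p\}_{n\in\mathbb{N}}$ (which is bounded above by $\delta(x_0, \oplus_{\ell^p} M_i)$), I would deduce that
$$\text{stat-lim}\ \|x_n - x_0\|_p^p = \sum_{j=1}^{\infty} \delta(x_0^{(j)}, M_j)^p.$$
Rewriting this in the form
$$\text{stat-lim}\ \sum_{j=1}^{\infty} \bigl(\delta(x_0^{(j)}, M_j)^p - \|x_n^{(j)} - x_0^{(j)}\|^p\bigr) = 0,$$
I would then fix an index $i \in \mathbb{N}$ and exploit termwise nonnegativity: for every $n$,
$$0 \le \delta(x_0^{(i)}, M_i)^p - \|x_n^{(i)} - x_0^{(i)}\|^p \le \sum_{j=1}^{\infty} \bigl(\delta(x_0^{(j)}, M_j)^p - \|x_n^{(j)} - x_0^{(j)}\|^p\bigr).$$

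From this pointwise bound, for any $\varepsilon > 0$ the set $\{k \le n : \delta(x_0^{(i)}, M_i)^p - \|x_k^{(i)} - x_0^{(i)}\|^p \ge \varepsilon\}$ is contained in the corresponding set for the full tail sum, whose natural density tends to zero. Hence $\text{stat-lim}\ \|x_n^{(i)} - x_0^{(i)}\|^p = \delta(x_0^{(i)}, M_i)^p$, and one final application of Lemma \ref{stat sum product}(2) (noting $\|x_n^{(i)} - x_0^{(i)}\| \le \delta(x_0^{(i)}, M_i)$) yields $\text{stat-lim}\ \|x_n^{(i)} - x_0^{(i)}\| = \delta(x_0^{(i)}, M_i)$, i.e., $\{x_n^{(i)}\}_{n\in\mathbb{N}}$ is statistically maximizing in $M_i$.

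The only delicate point, as opposed to the two-factor case in Proposition \ref{stat lpsum}, is legitimizing the infinite-sum telescoping step; but this is immediate from the absolute convergence of both series and the coordinatewise nonnegativity of the summands, so no new analytic difficulty arises. The proof is therefore essentially a coordinate-by-coordinate restriction of the argument already carried out for two factors, and I expect no genuine obstacle.
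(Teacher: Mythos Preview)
Your proposal is correct and follows exactly the route the paper intends: the paper gives no explicit proof of this corollary, treating it as the straightforward extension of the converse direction of Proposition~\ref{stat lpsum} to countably many coordinates, which is precisely what you have written out. The termwise nonnegativity argument you spell out is the same mechanism used (for two summands) in the paper's proof of Proposition~\ref{stat lpsum}, so nothing new is required.
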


\begin{remark}
We do not know whether the converse of Corollary \ref {stat max} is true or not.
\end{remark}

The following lemma is easy to prove.
\begin{lemma} \label {stat convgt lemma}(\cite {s})
$\{x_{n}\}_{n\in \mathbb{N}}$ is statistically convergent to $x$ if and only if there exists a set K= $\{ k_1<k_2<k_3<\dots\} \subseteq \mathbb{N} $ s.t. $\displaystyle{\lim_{n\rightarrow \infty}}$ $x_{k_n}$ = x and $ \displaystyle{\lim_{n\rightarrow \infty}} \frac{1}{n}|K \cap [1,n]|=1.$ 
\end{lemma}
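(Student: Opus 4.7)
The plan is to establish each direction of the equivalence separately; one direction exploits a given $K$, the other constructs $K$ from the statistical convergence.

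For the \emph{if} direction, assume $K = \{k_1 < k_2 < \cdots\}$ satisfies $\lim_n x_{k_n} = x$ and $\lim_n \frac{1}{n}|K \cap [1,n]| = 1$. Fix $\varepsilon > 0$ and choose $N_0$ with $\|x_{k_n} - x\| < \varepsilon$ for all $n \geq N_0$. Then
\[
\{m \in \mathbb{N} : \|x_m - x\| \geq \varepsilon\} \subseteq (\mathbb{N} \setminus K) \cup \{k_1, k_2, \ldots, k_{N_0-1}\},
\]
a set of natural density at most $1 - d(K) = 0$. Hence stat-lim $x_n = x$.

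For the converse, set $A_j = \{m : \|x_m - x\| \geq 1/j\}$ for each $j \in \mathbb{N}$; statistical convergence gives $d(A_j) = 0$ and $A_j \subseteq A_{j+1}$. I would build an increasing sequence $n_1 < n_2 < \cdots$ recursively so that (i) $|A_{j+1} \cap [1,n]|/n < 1/(j+1)$ for every $n \geq n_j$ (available since $d(A_{j+1}) = 0$), and (ii) $n_{j+1} \geq (j+1)^2 n_j$, the latter growth being reserved for density control. Then declare $m \in K$ precisely when either $m \leq n_1$, or $m \in (n_j, n_{j+1}]$ with $m \notin A_{j+1}$. If $k_n$ denotes the $n$-th element of $K$ and $k_n > n_j$, then $k_n \notin A_{j+1}$, so $\|x_{k_n} - x\| < 1/(j+1)$; since $k_n \to \infty$ forces the associated $j$ to tend to infinity, we obtain $x_{k_n} \to x$.

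The main technical obstacle is verifying $d(K) = 1$. For $n \in (n_j, n_{j+1}]$ the complement of $K$ in $[1,n]$ decomposes as $\bigcup_{i=1}^{j-1} (A_{i+1} \cap (n_i, n_{i+1}]) \cup (A_{j+1} \cap (n_j, n])$, so condition (i) yields
\[
\frac{|[1,n] \setminus K|}{n} \leq \frac{1}{n_j} \sum_{i=1}^{j-1} \frac{n_{i+1}}{i+1} + \frac{1}{j+1}.
\]
The growth condition (ii) telescopes to $n_k/n_j \leq \prod_{\ell=k+1}^{j} \ell^{-2}$ for $k < j$, which forces the sum on the right to be $O(1/j)$, so the whole bound tends to $0$ as $j \to \infty$ (equivalently as $n \to \infty$). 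The delicacy here is that mere divergence $n_j \to \infty$ would allow the earlier intervals to contribute positive density; the superlinear rate is precisely what makes this standard Salát/Fridy argument work.
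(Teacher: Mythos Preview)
The paper does not actually prove this lemma; it is quoted from \v{S}al\'{a}t \cite{s} with the remark ``easy to prove'' and no argument given. Your proof is correct and is essentially the standard one from that source.

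One comment worth making: your growth hypothesis (ii), $n_{j+1}\ge (j+1)^2 n_j$, is superfluous, and your closing sentence that ``the superlinear rate is precisely what makes this standard \v{S}al\'{a}t/Fridy argument work'' slightly misreads the literature. The nesting $A_{i+1}\subseteq A_{j+1}$ for $i\le j$ lets your decomposition telescope directly: for $n\in(n_j,n_{j+1}]$,
\[
|[1,n]\setminus K|\;=\;\sum_{i=1}^{j-1}\bigl|A_{i+1}\cap(n_i,n_{i+1}]\bigr|+\bigl|A_{j+1}\cap(n_j,n]\bigr|\;\le\;\bigl|A_{j+1}\cap(n_1,n]\bigr|\;<\;\frac{n}{j+1}
\]
by your condition (i) alone (since $n>n_j$). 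Thus $|[1,n]\setminus K|/n<1/(j+1)\to 0$ with no constraint on the spacing of the $n_j$. In \v{S}al\'{a}t's proof it is the monotonicity of the sets $A_j$, not a growth rate, that carries the density estimate.
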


\begin{proposition} \label{charecterisation}
	Suppose $A\subseteq X$. Then the the following are equivalent:
\begin{enumerate}
\item $A$ is $M$-compact.
\item Every statistically maximizing  sequence in $A$ has a convergent subsequence.		
\item Every statistically maximizing sequence in $A$ has a statistically convergent subsequence.
\end{enumerate} 
\end{proposition}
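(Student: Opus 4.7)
The plan is to establish the equivalence by a cyclic chain of implications $(1) \Rightarrow (2) \Rightarrow (3) \Rightarrow (1)$, with Lemma \ref{stat convgt lemma} serving as the key bridge between ordinary and statistical convergence.

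For $(1) \Rightarrow (2)$: Suppose $A$ is $M$-compact and $\{x_n\}_{n\in\mathbb{N}} \subseteq A$ is statistically maximizing, so there is some $x \in X$ with $\|x_n - x\|$ statistically convergent to $\delta(x,A)$. Apply Lemma \ref{stat convgt lemma} to the real sequence $\{\|x_n - x\|\}$ to obtain a set $K = \{k_1 < k_2 < \cdots\} \subseteq \mathbb{N}$ of density $1$ along which $\|x_{k_n} - x\| \to \delta(x,A)$. Thus $\{x_{k_n}\}$ is itself a (genuine) maximizing sequence in $A$, and by $M$-compactness it has a convergent subsequence, which is the desired convergent subsequence of $\{x_n\}$.

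For $(2) \Rightarrow (3)$: This is immediate, since any convergent sequence is statistically convergent to the same limit (the set of indices where the norm of the difference is $\geq \varepsilon$ is eventually empty, so certainly of density zero).

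For $(3) \Rightarrow (1)$: Let $\{x_n\}$ be a maximizing sequence in $A$. By the result from \cite{sdb} recalled before Lemma \ref{stat sum product}, every maximizing sequence is statistically maximizing, so hypothesis $(3)$ yields a subsequence $\{x_{n_k}\}$ that is statistically convergent to some $z \in X$. Applying Lemma \ref{stat convgt lemma} once more, now to the sequence $\{x_{n_k}\}$ itself, we extract a set $K' = \{j_1 < j_2 < \cdots\}$ of density $1$ in $\mathbb{N}$ with $x_{n_{j_i}} \to z$ as $i \to \infty$. This is a convergent subsequence of the original maximizing sequence $\{x_n\}$, establishing $M$-compactness of $A$.

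I do not anticipate a serious obstacle here; the whole content is the double use of Lemma \ref{stat convgt lemma} to promote statistical convergence (respectively, statistical maximization) to ordinary convergence along a density-$1$ subsequence. The one point requiring care is making sure that the subsequence extracted in $(1)\Rightarrow(2)$ is taken from the indices $K$ inherited from the statistical-convergence characterization, so that it genuinely inherits the maximizing property before $M$-compactness is invoked; symmetrically, in $(3)\Rightarrow(1)$, one must remember that the statistically convergent subsequence need not itself be convergent, which is exactly why Lemma \ref{stat convgt lemma} must be reapplied.
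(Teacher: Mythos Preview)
Your proof is correct and follows essentially the same cyclic route $(1)\Rightarrow(2)\Rightarrow(3)\Rightarrow(1)$ as the paper, with Lemma~\ref{stat convgt lemma} applied at the same two points: once in $(1)\Rightarrow(2)$ to extract a genuine maximizing subsequence from a statistically maximizing one, and once in $(3)\Rightarrow(1)$ to extract an ordinarily convergent subsequence from a statistically convergent one. The only cosmetic difference is that in $(3)\Rightarrow(1)$ the paper observes directly that ordinary convergence of $\|x_n-x\|$ implies statistical convergence, whereas you cite the result from \cite{sdb}; these amount to the same thing.
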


\begin{proof}
	$(1)\Rightarrow (2)$
Let $\{x_{n}\}_{n\in \mathbb{N}}$ be a  statistically maximizing sequence in A. Then there exists $x \in X$ such that  $\|{x_{n} - x\|}_{n\in \mathbb{N}}$ is statistically convergent to $\delta(x,A).$ By Lemma \ref{stat convgt lemma}, $\{ \| x_{n} - x \| \}_{n\in \mathbb{N}}$ has a subsequence $\{ \| x_{n_k} - x \| \}_{k\in \mathbb{N}}$ such that  $\| x_{n_k}- x \| \rightarrow$ $\delta(x,A)$ as $k\rightarrow \infty.$ Thus $\{x_{n_k}\}_{k\in \mathbb{N}}$ is a maximizing sequence in A. Since $A$ is $M$-compact, so, $\{x_{n_k}\}_{k\in \mathbb{N}}$ has a convergent subsequence. Consequently,  $\{x_{n}\}_{n\in \mathbb{N}}$ also has a convergent subsequence.

$(2) \Rightarrow(3)$ Obvious.
	
$(3)\Rightarrow (1)$ Let $\{x_{n}\}_{n\in \mathbb{N}}$ be a maximizing sequence in $A.$ Then there exists $ x \in X$ s.t. $\| x_n - x \|\rightarrow$ $\delta(x,A)$ as $n\rightarrow \infty.$ Hence  $\{\|x_{n} - x\|\}_{n\in \mathbb{N}}$ is statistically convergent to $\delta(x,A),$ i.e., $\{x_{n}\}_{n\in \mathbb{N}}$ is a statistically maximizing sequence in $A.$ Hence it has a statistically convergent subsequence $\{x_{n_k}\}$ in $A.$ By Lemma \ref{stat convgt lemma}, there exists a convergent subsequence $\{x_{n_{k_l}}\}$ of $\{x_{n_k}\}.$
Consequently, $\{x_{n_{k_l}}\}$ is a convergent subsequence of $\{x_n\}.$ Thus A is $M$-compact.
\end{proof}

\begin{remark} 
\begin{enumerate}
\item Proposition \ref{charecterisation} gives a characterization of $M$-compact sets in terms of statistically maximizing sequence which is a weaker notion than maximizing sequence. 
\item It was proved in \cite{sdb} that if  $M \subseteq X.$ If $\{x_n\}_{n\in \mathbb{N}}$ is a statistically maximizing sequence in $M$ then $\{x_n\}_{n\in \mathbb{N}}$ is a statistically maximizing sequence in $\overline{M}.$
\end{enumerate}
\end{remark}

\section{$\mathcal{I}$-Convergence and $\mathcal{I}$-$M$-compactness }

We first recall the  following definitions from \cite {pav}.

\begin{definition}\cite{pav}
Let $X\neq \phi$ . A class $\mathcal{I}$ $\subset$ $2^{X}$  of subsets of X is said to be an ideal in X provided that  $\mathcal{I}$ is additive and hereditary, i.e.,  $\mathcal{I}$  satisfies the following conditions:
\begin{enumerate}
\item$\phi \in \mathcal{I}.$ 
\item$A,B \in \mathcal{I}$ $\Longrightarrow$ $ A \cup B \in  \mathcal{I}$  (additivity).
\item $A \in \mathcal{I}$ , $B \subset A$   $\Longrightarrow$ $ B \in \mathcal{I}$ (hereditary).
\end{enumerate}
An ideal is called non-trivial if $X$ $\notin \mathcal{I}.$ 
\end{definition}

\begin{definition}\cite{pav}
A non-trivial ideal $\mathcal{I}$ in $X$ is called admissible if $\{x\} \in \mathcal{I}$ for each $x \in X.$
\end{definition}

\begin{definition}\cite{pav}
Let $\mathcal{I}$ be a non trivial ideal in $\mathbb{N}$. A sequence $\{x_{n}\}_{n\in \mathbb{N}}$ of real numbers is said to be $\mathcal{I}-$convergent to $x\in \mathbb{R}$ if for every $\varepsilon>0,$ the set $A(\varepsilon) =\{n$ : $\vert x_n - x \vert \geq \varepsilon \} \in \mathcal{I}.$ 
\end{definition}

Likewise, we define,

\begin{definition}
Let $\mathcal{I}$ be a non trivial ideal in $\mathbb{N}$ and $X$ be a real normed linear space. A sequence $\{x_{n}\}_{n\in \mathbb{N}}$ in $X$ is said to be $\mathcal{I}$-convergent to $x\in X$ if for every $\varepsilon>0,$ the set $A(\varepsilon)=\{n : \Vert x_n - x \Vert \geq \varepsilon \}\in \mathcal{I}.$ If a sequence $\{x_{n}\}_{n\in \mathbb{N}}$ $\mathcal{I}$-converges to $x,$ then we denote it as, $\mathcal{I}$-lim $x_n=x.$
\end{definition}

\begin{remark} \label{ag}
Clearly if $\mathcal{I}$ is a non-trivial admissible ideal in $\mathbb{N}$ , then norm convergence implies $\mathcal{I}$-convergence.
\end{remark}

\begin{proposition} \label{ah}
If $\mathcal{I}$ is a non-trivial admissible ideal in $\mathbb{N}$ then $\{x_n\}_{n\in \mathbb{N}}$ $\mathcal{I}$-converges to $x$ implies $\{x_n\}_{n\in \mathbb{N}}$ has a convergent subsequence which converges to $x.$
\end{proposition}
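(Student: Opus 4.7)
The plan is to exploit non-triviality and admissibility to build a subsequence greedily, using the sets $A_k = \{n \in \mathbb{N} : \|x_n - x\| \geq 1/k\}$. By $\mathcal{I}$-convergence we know $A_k \in \mathcal{I}$ for every $k$, and the key structural observation I would establish first is that the complement $\mathbb{N} \setminus A_k$ must be infinite.

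To see that, note that admissibility together with additivity gives that every finite subset of $\mathbb{N}$ lies in $\mathcal{I}$. If $\mathbb{N} \setminus A_k$ were finite, it would be in $\mathcal{I}$, and then $\mathbb{N} = A_k \cup (\mathbb{N} \setminus A_k) \in \mathcal{I}$ by additivity, contradicting non-triviality. Hence $\mathbb{N} \setminus A_k$ is infinite for each $k$.

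With that in hand, I would construct indices $n_1 < n_2 < \dots$ recursively: pick $n_1 \in \mathbb{N} \setminus A_1$ arbitrarily; having chosen $n_1 < \dots < n_{k-1}$, use the fact that $\mathbb{N} \setminus A_k$ is infinite to select $n_k \in \mathbb{N} \setminus A_k$ with $n_k > n_{k-1}$. By construction $\|x_{n_k} - x\| < 1/k$, so $x_{n_k} \to x$ as $k \to \infty$, giving the required convergent subsequence.

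The argument is short and the only real obstacle is the complement-infinity step; once one sees that admissibility forces $\mathcal{I}$ to contain all finite sets and that non-triviality prevents complements of ideal members from collapsing into the ideal, the diagonal selection is completely routine.
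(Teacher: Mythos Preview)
Your argument is correct. Both your proof and the paper's rest on the same structural fact---that for a non-trivial admissible ideal no member of $\mathcal{I}$ can have finite complement---but you deploy it differently. The paper argues by contradiction: if no subsequence converges to $x$, then some tail $\{N_0,N_0+1,\dots\}$ lies inside a set of the form $\{n:\|x_n-x\|\geq\varepsilon\}\in\mathcal{I}$, and adjoining the finite initial segment forces $\mathbb{N}\in\mathcal{I}$. You instead give a direct construction, extracting indices $n_k$ from the infinite complements $\mathbb{N}\setminus A_k$ with $A_k=\{n:\|x_n-x\|\geq 1/k\}$. Your route is slightly more explicit and yields an actual subsequence with a quantitative rate $\|x_{n_k}-x\|<1/k$; the paper's contradiction is marginally shorter since it avoids the inductive selection. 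Either way the content is the same and neither approach requires anything beyond the definitions.
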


\begin{proof}
If possible, let $\{x_{n}\}_{n\in \mathbb{N}}$ has no such convergent subsequences. Then there exists $\varepsilon>0$ and there exists $N_0 \in \mathbb{N}$ such that
$$\Vert x_{n} -x \Vert \geq \varepsilon~ \forall ~ n\geq N_0.$$
Since $\{x_{n}\}_{n\in \mathbb{N}}$  is $\mathcal{I}$-convergent to $x,$ so, $\{n\in \mathbb{N} : \Vert x_n -x \Vert \geq \varepsilon\} \in \mathcal{I}.$ Also, $\{ N_0, N_0+1, N_0+2,\dots\} \subseteq \{n\in \mathbb{N}$ : $\Vert x_n -x \Vert \geq \varepsilon\}\Longrightarrow \{N_0, N_0+1, N_0+2,\dots\} \in \mathcal{I}.$ Now, since $\mathcal{I}$ is admissible, so $\{1,2,\dots, N_0-1\} \in \mathcal{I}.$ Hence, $\mathbb{N}= \{1,2,\dots,N_0-1\} \bigcup \{N_0,N_0+1,N_0+2,\dots\} \in \mathcal{I},$ which is a contradiction (since $\mathcal{I}$ is non trivial).
\end{proof}

Now, we recall  an important example of $\mathcal{I}$ convergence from \cite{pav}.
\begin{example}
Let $\emptyset \neq M \subsetneqq \mathbb{N}.$ Put $\mathcal{I}_M = 2^M.$ Then $\mathcal{I}_M$ is a non trivial ideal in $\mathbb{N}$ which is not admissible. Also, a sequence $\{x_{n}\}_{n\in \mathbb{N}}$ is  $\mathcal{I}_M$-convergent if and only if it is constant in $\mathbb{N} \setminus M.$ 
\end{example}

\begin{definition}
Let $X$ be a real normed linear space and $M$ be a nonempty, bounded subset of $X$. A sequence $\{x_n\}_{n\in \mathbb{N}}\subset M$ is said to be $\mathcal{I}$-maximizing if there exists $x\in X$ such that $\Big\{\| x_n-x \|\Big\}_{n \in \mathbb{N}}$ is $\mathcal{I}$-convergent to $\delta(x,M)=\mbox{sup}\Big\{\Vert x-y \Vert : y\in M\Big\}.$
\end{definition}
We have the following lemma. We omit the easy proof.

\begin{lemma} \label{I convergence sum product}
\begin{enumerate}

\item \cite{pav} If the real sequences $\{x_{n}\}_{n\in \mathbb{N}}$ and $\{y_{n}\}_{n\in \mathbb{N}}$ are $\mathcal{I}$-convergent to $x$ and $y$ respectively  then $\{x_{n} + y_{n}\}_{n\in \mathbb{N}}$ is $\mathcal{I}$-convergent to $x+y.$
\item If $\{x_{n}\}_{n\in \mathbb{N}}$ be a real sequence such that $0\leq x_n \leq x$ for all $n.$ Then $\{x_{n}^p\}_{n\in \mathbb{N}}$ $\mathcal{I}$ converges to $x^p$ if and only if $\{x_{n}\}_{n\in \mathbb{N}}$ $\mathcal{I}$ converges to x. ($1\leq p<\infty$)
\end{enumerate}
\end{lemma}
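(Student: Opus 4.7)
The plan is to prove both parts by reducing the desired ideal-membership to ideal-memberships given by hypothesis, exploiting only the hereditary and additive axioms of $\mathcal{I}$. The entire argument is a direct analogue of Lemma \ref{stat sum product}, with sets of density zero replaced by sets in $\mathcal{I}$; no density-specific tool is used.

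For part (1), I would fix $\varepsilon>0$ and apply the triangle inequality $|(x_n+y_n)-(x+y)|\leq|x_n-x|+|y_n-y|$ to obtain the key containment
$$\{n:|(x_n+y_n)-(x+y)|\geq \varepsilon\}\subseteq \{n:|x_n-x|\geq \varepsilon/2\}\cup\{n:|y_n-y|\geq \varepsilon/2\}.$$
The two sets on the right lie in $\mathcal{I}$ by assumption; additivity puts their union in $\mathcal{I}$, and the hereditary property then puts the left-hand set in $\mathcal{I}$. Since $\varepsilon$ is arbitrary, this gives $\mathcal{I}$-$\lim(x_n+y_n)=x+y$.

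For part (2), I would first dispose of the degenerate case $x=0$: then $0\leq x_n\leq 0$ forces $x_n=0$ for all $n$, and both statements hold trivially. Assume $x>0$. For the forward direction, using the telescoping identity $x^p-x_n^p=(x-x_n)\sum_{j=0}^{p-1}x^{p-1-j}x_n^{j}$ for integer $p$, or the mean value theorem for general $p\geq 1$, together with $0\leq x_n\leq x$, yields
$$0\leq x^p-x_n^p\leq p\,x^{p-1}(x-x_n).$$
Hence for any $\varepsilon>0$,
$$\{n:|x_n^p-x^p|\geq \varepsilon\}\subseteq \bigl\{n:|x_n-x|\geq \varepsilon/(p\,x^{p-1})\bigr\}\in\mathcal{I},$$
which gives $\mathcal{I}$-$\lim x_n^p=x^p$ by heredity.

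For the converse, fix $\varepsilon>0$ and, without loss of generality, $\varepsilon\leq x$ (otherwise the set in question is empty). Set $\eta:=x^p-(x-\varepsilon)^p>0$. Whenever $x-x_n\geq \varepsilon$ we have $x_n\leq x-\varepsilon$ and thus $x_n^p\leq (x-\varepsilon)^p$, so $x^p-x_n^p\geq \eta$. This gives the containment
$$\{n:|x_n-x|\geq \varepsilon\}\subseteq \{n:|x_n^p-x^p|\geq \eta\}\in\mathcal{I},$$
and heredity finishes the proof. There is no real obstacle here; the only subtlety, as in the classical proof of Lemma \ref{lpsum}, is that the backward direction needs a quantitative inequality that does not degenerate at the point where $x_n$ is close to $0$, which is precisely why one works with the constant $\eta$ determined by $\varepsilon$ rather than trying to invert the map $t\mapsto t^p$ uniformly.
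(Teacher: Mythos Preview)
Your argument is correct in both parts. The paper does not supply a proof of this lemma at all (it writes ``We omit the easy proof'' and cites \cite{pav} for part (1)), so there is nothing to compare against; your set-containment reductions via the triangle inequality for (1) and the monotonicity/mean-value bounds $0\le x^p-x_n^p\le p\,x^{p-1}(x-x_n)$ and $x^p-x_n^p\ge x^p-(x-\varepsilon)^p$ for (2) are exactly the standard route and use only the hereditary and additive axioms of $\mathcal{I}$, as intended. The only cosmetic slip is that what you label the ``forward direction'' of the biconditional in (2) is in fact the implication $x_n\to x\Rightarrow x_n^p\to x^p$, i.e.\ the converse as the statement is written; both directions are nonetheless established.
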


\begin{proposition}\label{I lpsum}
Let $X$ and $Y$ be two normed linear spaces and $Z = X \oplus_{p} Y$ ($1 \leq p <\infty$). Let $ M\subseteq X$ and $N\subseteq Y$ be bounded subsets of $X$ and $Y$ respectively. Then $z_{n}=  (x_{n},y_{n}) \in M\oplus_{p}N$ is a $\mathcal{I}$-maximizing sequence in $M\oplus_{p}N$  if and only if $x_{n}$ and $y_{n}$ are $\mathcal{I}$-maximizing sequences in $M$ and $N$ respectively. 
\end{proposition}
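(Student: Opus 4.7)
The plan is to mimic the proof of Proposition \ref{stat lpsum} almost verbatim, replacing each appeal to a property of statistical convergence by the corresponding property of $\mathcal{I}$-convergence supplied in Lemma \ref{I convergence sum product}, together with the hereditary property of the ideal $\mathcal{I}$. Nothing deeper than that is required, since every density-based argument in the statistical proof has a direct ideal-theoretic analogue: density-zero sets are replaced by members of $\mathcal{I}$, and the fact that a subset of a density-zero set has density zero is replaced by the hereditary axiom.

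For the forward direction, suppose $\{x_n\}$ and $\{y_n\}$ are $\mathcal{I}$-maximizing in $M$ and $N$, witnessed by $x_0 \in X$ and $y_0 \in Y$ respectively. I would first apply Lemma \ref{I convergence sum product}(2) to the bounded non-negative sequences $\{\|x_n - x_0\|\}$ and $\{\|y_n - y_0\|\}$ to conclude $\mathcal{I}\text{-lim}\,\|x_n - x_0\|^p = \delta(x_0,M)^p$ and $\mathcal{I}\text{-lim}\,\|y_n - y_0\|^p = \delta(y_0,N)^p$. Then Lemma \ref{I convergence sum product}(1) gives $\mathcal{I}\text{-lim}\,(\|x_n-x_0\|^p + \|y_n-y_0\|^p) = \delta(x_0,M)^p + \delta(y_0,N)^p = \delta((x_0,y_0),M\oplus_p N)^p$. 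That is exactly $\mathcal{I}\text{-lim}\,\|(x_n,y_n)-(x_0,y_0)\|_p^p = \delta((x_0,y_0),M\oplus_p N)^p$, and a second application of Lemma \ref{I convergence sum product}(2) pulls off the $p$-th power to finish.

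For the converse, assume $\{(x_n,y_n)\}$ is $\mathcal{I}$-maximizing in $M\oplus_p N$ with witness $(x_0,y_0)$. By Lemma \ref{I convergence sum product}(2) the $p$-th powers also $\mathcal{I}$-converge, so for every $\varepsilon > 0$ the set
\[
B(\varepsilon) := \Big\{k : \big(\delta(x_0,M)^p-\|x_k-x_0\|^p\big) + \big(\delta(y_0,N)^p-\|y_k-y_0\|^p\big) \geq \varepsilon\Big\}
\]
lies in $\mathcal{I}$. Since both summands are non-negative for every $k$, the set $A_x(\varepsilon) := \{k : \delta(x_0,M)^p - \|x_k - x_0\|^p \geq \varepsilon\}$ is contained in $B(\varepsilon)$, and analogously for $A_y(\varepsilon)$. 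By the hereditary property of the ideal, $A_x(\varepsilon), A_y(\varepsilon) \in \mathcal{I}$ for every $\varepsilon > 0$, so $\mathcal{I}\text{-lim}\,\|x_n-x_0\|^p = \delta(x_0,M)^p$ and $\mathcal{I}\text{-lim}\,\|y_n-y_0\|^p = \delta(y_0,N)^p$. Applying Lemma \ref{I convergence sum product}(2) one last time converts these into the desired statements $\mathcal{I}\text{-lim}\,\|x_n-x_0\| = \delta(x_0,M)$ and $\mathcal{I}\text{-lim}\,\|y_n-y_0\| = \delta(y_0,N)$.

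There is no real obstacle; the only point that requires care is checking that the $A_x(\varepsilon) \subseteq B(\varepsilon)$ inclusion is genuinely available, which relies on the non-negativity of $\delta(x_0,M)^p - \|x_k-x_0\|^p$ (true since $x_k \in M$) and $\delta(y_0,N)^p - \|y_k-y_0\|^p$ (true since $y_k \in N$). Once that is noted, the hereditary property of the ideal — rather than any density computation — carries the whole reverse implication.
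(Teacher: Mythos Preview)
Your proposal is correct and follows essentially the same approach as the paper's own proof: both directions are handled via Lemma \ref{I convergence sum product}, and in the converse direction the key inclusion $A_x(\varepsilon)\subseteq B(\varepsilon)$ (justified by non-negativity of the two summands) together with the hereditary property of $\mathcal{I}$ replaces the density argument from Proposition \ref{stat lpsum}. The paper's argument is identical in structure, only slightly more terse in not naming the hereditary axiom explicitly.
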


\begin{proof}
Firstly, let $\{x_{n}\}_{n\in \mathbb{N}}$ and $\{y_{n}\}_{n\in \mathbb{N}}$ be $\mathcal{I}$-maximizing sequences in $M$ and $N$ respectively. Then there exists $x_0 \in X$ and $y_0 \in Y$ such that $\mathcal{I}$-lim $\Vert x_n - x_0 \Vert$ = $\delta(x_0,M)$ and $\mathcal{I}$-lim $\Vert y_n - y_0 \Vert$ = $\delta(y_0,N).$ Now, from Lemma \ref{I convergence sum product}, 
$$\mathcal{I}-\mbox{lim}\Vert x_n - x_0 \Vert ^p + \Vert y_n - y_0 \Vert^p = \delta(x_0 , M)^p + \delta(y_0 , N)^p$$  
$$\Longrightarrow \mathcal{I}-\mbox{lim} \Vert (x_n,y_n) - (x_0,y_0) \Vert_p ^p = \delta\Big((x_0 , y_0) , M \oplus _p N\Big)^p,$$  
$$\Longrightarrow \mathcal{I}-\mbox{lim} \Vert (x_n,y_n) - (x_0,y_0) \Vert_p  = \delta\Big((x_0 , y_0) , M \oplus _p N\Big).$$ 

Conversely, let $\{(x_{n},y_n)\}_{n\in \mathbb{N}}$ be $\mathcal{I}-$maximizing in $M \oplus_p N.$ Then there exists $(x_0 ,y_0) \in X \oplus_p Y $ such that 
$$\mathcal{I}-\mbox{lim}  \Vert (x_n,y_n) - (x_0,y_0) \Vert_p = \delta((x_0 , y_0) , M \oplus _p N),$$

$$\Longrightarrow \mathcal{I}-\mbox{lim} \Vert x_n - x_0 \Vert ^p + \Vert y_n - y_0 \Vert^{p} = \delta(x_0 , M)^p + \delta(y_0 , N)^p.$$ 
Let $\varepsilon > 0$. Then \\
$\Big\{ n\in \mathbb{N} : \delta(x_0 , M)^p -\Vert x_k - x_0 \Vert^p  \geq \varepsilon\Big\} $ \\ 
 $\subseteq$ $\Big\{ n\in \mathbb{N} : \delta(x_0 , M)^p -\Vert x_k - x_0 \Vert^p + \delta(y_0 , N)^p -\Vert y_k - y_0 \Vert^p \geq \varepsilon\Big\} $  \\
 = $\Big\{ n\in \mathbb{N} : \delta(x_0 , M)^p + \delta(y_0 , N)^p -(\Vert x_k - x_0 \Vert^p + \Vert y_k - y_0 \Vert^p) \geq \varepsilon\Big\}.$  \\
Since,  
$$\Big\{n \in \mathbb{N} : \delta(x_0 , M)^p+ \delta(y_0 , N)^p-\Big(\Vert x_n - x_0 \Vert^p + \Vert y_n - y_0 \Vert^p\Big) \geq \varepsilon\Big\} \in \mathcal{I}$$
$$\Longrightarrow \Big\{ n\in \mathbb{N} : \delta(x_0 , M)^p -\Vert x_k - x_0 \Vert^p  \geq \varepsilon\Big\} \in \mathcal{I}.$$ 
So, $\mathcal{I}$-lim $\Vert x_n - x_0 \Vert^p$ = $\delta(x_0,M)^p.$ Similarly , $\mathcal{I}$-lim $\Vert y_n - y_0 \Vert^p$ = $\delta(y_0,N)^p.$ Lastly, by Lemma \ref{I convergence sum product}, 
$\mathcal{I}$-lim $\Vert x_n - x_0 \Vert$ = $\delta(x_0,M)$ and $\mathcal{I}$-lim $\Vert y_n - y_0 \Vert$ = $\delta(y_0,N).$
\end{proof} 

\begin{corollary} \label{I max}
Suppose $X=\oplus_{l^p} X_i$ and $M_i \subseteq X_i$ for all $i.$ If $\{x_n\}_{n\in \mathbb{N}}$ is $\mathcal{I}$-maximizing in $\oplus_{l^p} M_i$ then the ith co-ordinate sequence $\{x^{(i)}_n\}_{n\in \mathbb{N}}$ is $\mathcal{I}$-maximizing in $M_i~\mbox{for}~1\leq p <\infty$ for all $i.$ 
\end{corollary}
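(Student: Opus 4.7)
The plan is to mirror the forward direction of Proposition~\ref{I lpsum}, extending the two-factor $\ell^p$ argument to a countable sum and using only the hereditary property of $\mathcal{I}$ to isolate a single coordinate at a time. Let $z = (z^{(i)})_{i\in\mathbb{N}} \in \oplus_{l^p}X_i$ witness the hypothesis, so $\mathcal{I}$-lim $\|x_n - z\|_p = \delta(z, \oplus_{l^p} M_i)$. The first step is to generalize the farthest-distance formula used in Lemma~\ref{lpsum}:
\[
\delta\bigl(z,\oplus_{l^p} M_i\bigr)^p = \sum_{j=1}^{\infty}\delta(z^{(j)},M_j)^p.
\]
The $\leq$ inequality is immediate termwise; for the reverse, given $\varepsilon > 0$, I would pick $m^{(j)} \in M_j$ with $\|z^{(j)} - m^{(j)}\|^p \geq \delta(z^{(j)},M_j)^p - 2^{-j}\varepsilon$, where boundedness of $\oplus_{l^p} M_i$ ensures both that the series converges and that $(m^{(j)})_j$ lies in $\oplus_{l^p} M_i$.

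Given the identity, applying Lemma~\ref{I convergence sum product}(2) to the hypothesis yields $\mathcal{I}$-lim $\sum_{j=1}^{\infty} \|x_n^{(j)} - z^{(j)}\|^p = \sum_{j=1}^{\infty} \delta(z^{(j)}, M_j)^p$. Now fix $i \in \mathbb{N}$. Since $x_n^{(j)} \in M_j$, each term $\delta(z^{(j)}, M_j)^p - \|x_n^{(j)} - z^{(j)}\|^p$ is non-negative, so for every $\varepsilon > 0$,
\[
\Bigl\{n : \delta(z^{(i)}, M_i)^p - \|x_n^{(i)} - z^{(i)}\|^p \geq \varepsilon\Bigr\} \subseteq \Bigl\{n : \sum_{j}\bigl[\delta(z^{(j)}, M_j)^p - \|x_n^{(j)} - z^{(j)}\|^p\bigr] \geq \varepsilon\Bigr\}.
\]
The right-hand set lies in $\mathcal{I}$ by the preceding $\mathcal{I}$-convergence, so hereditariness of $\mathcal{I}$ puts the left-hand set in $\mathcal{I}$ as well. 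Thus $\mathcal{I}$-lim $\|x_n^{(i)} - z^{(i)}\|^p = \delta(z^{(i)}, M_i)^p$, and a final use of Lemma~\ref{I convergence sum product}(2) gives $\mathcal{I}$-lim $\|x_n^{(i)} - z^{(i)}\| = \delta(z^{(i)}, M_i)$, proving that the $i$-th coordinate sequence is $\mathcal{I}$-maximizing in $M_i$.

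The main obstacle is the first step: establishing the farthest-distance identity for a countable $\ell^p$ sum, which goes slightly beyond what Lemma~\ref{lpsum} proved for two factors. The dyadic $\varepsilon$-approximation with weights $2^{-j}$, together with the boundedness of $\oplus_{l^p} M_i$, is the natural mechanism to ensure that the approximating sequence $(m^{(j)})_j$ actually sits inside $\oplus_{l^p} M_i$. Once that identity is in place, the rest is a routine transcription of the argument of Proposition~\ref{I lpsum}, since the set-theoretic inclusion isolates only the $i$-th coordinate and hereditariness of $\mathcal{I}$ delivers the conclusion for each $i$ separately.
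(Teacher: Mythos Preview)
The paper does not supply a proof of this corollary at all: it is stated immediately after Proposition~\ref{I lpsum} and is meant to follow by the same reasoning, exactly as Remark~\ref{n max} and Corollary~\ref{stat max} are recorded (without argument) as countable-sum versions of their two-factor predecessors. Your proposal is precisely the expected elaboration: you reproduce the converse direction of Proposition~\ref{I lpsum}, replacing the two-term sum by a series, and the set-inclusion/hereditariness step isolates each coordinate just as in the finite case. The only extra content you supply beyond the paper is the farthest-distance identity $\delta(z,\oplus_{l^p}M_i)^p=\sum_j\delta(z^{(j)},M_j)^p$ for the countable sum, which the paper tacitly assumes; your $2^{-j}\varepsilon$ approximation is the standard way to justify it under the standing boundedness hypothesis on $\oplus_{l^p}M_i$. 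In short, your approach is the same as the paper's intended one, only written out in full.
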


\begin{definition}
A $\subseteq$ $X$ is $\mathcal{I}$-$M$-compact if every $\mathcal{I}$-maximizing sequence in $A$ has a convergent subsequence in $A.$
\end{definition}

\begin{remark} \label{ac}
Clearly, compactness implies $\mathcal{I}$-$M$-compactness. Also, $\mathcal{I}$-$M$-compactness coincides with $M$-compactness for $\mathcal{I}=\mathcal{I}_{Fin}$ where $\mathcal{I}_{Fin}$ is  the class of all finite subsets of $\mathbb{N}.$ 
\end{remark}
	
\begin{proposition} 
If $\mathcal{I}$ is a non-trivial admissible ideal in $\mathbb{N}$ then $M$-compactness and  $\mathcal{I}$-$M$-compactness are equivalent.
\end{proposition}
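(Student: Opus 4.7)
The plan is to prove the two implications separately, mirroring the structure of Proposition \ref{charecterisation} but using the ideal-theoretic counterparts developed in this section (Remark \ref{ag} and Proposition \ref{ah}) in place of the analogous statistical results.

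For the direction ``$\mathcal{I}$-$M$-compact $\Rightarrow$ $M$-compact'', I would take an arbitrary maximizing sequence $\{x_n\}\subset A$, so that $\|x_n-x\|\to \delta(x,A)$ for some $x\in X$. Since $\mathcal{I}$ is admissible, Remark \ref{ag} tells us that norm convergence of the scalar sequence $\{\|x_n-x\|\}$ to $\delta(x,A)$ forces $\mathcal{I}$-convergence to the same limit. Therefore $\{x_n\}$ is an $\mathcal{I}$-maximizing sequence in $A$, and $\mathcal{I}$-$M$-compactness of $A$ yields a convergent subsequence.

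For the converse, suppose $A$ is $M$-compact and let $\{x_n\}\subset A$ be $\mathcal{I}$-maximizing, so that $\mathcal{I}$-$\lim \|x_n-x\|=\delta(x,A)$ for some $x\in X$. Apply Proposition \ref{ah} to the real sequence $\{\|x_n-x\|\}$: since $\mathcal{I}$ is a non-trivial admissible ideal in $\mathbb{N}$, there is a subsequence $\{\|x_{n_k}-x\|\}$ converging in the usual sense to $\delta(x,A)$. Thus $\{x_{n_k}\}$ is an ordinary maximizing sequence in $A$, and by $M$-compactness of $A$ it has a further convergent subsequence, which is also a convergent subsequence of the original $\{x_n\}$. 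Hence $A$ is $\mathcal{I}$-$M$-compact.

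There is no genuine obstacle here; the proof is almost a formality once Remark \ref{ag} and Proposition \ref{ah} are in place. The only point worth stressing is that admissibility of $\mathcal{I}$ is used in both directions: in the forward direction to get $\mathcal{I}$-convergence from ordinary convergence, and in the reverse direction to extract an honestly convergent subsequence from an $\mathcal{I}$-convergent one. Without admissibility (cf.\ the example $\mathcal{I}_M=2^M$ recalled just before the definition of $\mathcal{I}$-maximizing), both steps would fail.
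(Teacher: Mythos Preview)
Your proof is correct and follows essentially the same approach as the paper's own proof: both directions are handled exactly as you describe, invoking Remark \ref{ag} to pass from ordinary to $\mathcal{I}$-maximizing and Proposition \ref{ah} to extract an ordinary maximizing subsequence from an $\mathcal{I}$-maximizing one. The only cosmetic difference is that the paper presents the two implications in the opposite order.
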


\begin{proof}
Let $S\subset$ be $M$-compact. Also let $\{x_n\}_{n\in  \mathbb{N}}$ be an $\mathcal{I}$-maximizing sequence in $S.$ Then there exists $x_0 \in X$ such that $ \mathcal{I}$-lim $\Vert x_n - x_0 \Vert = \delta(x_0,S).$ Then by Proposition \ref{ah}, $\{x_{n}\}_{n\in \mathbb{N}}$ has a subsequence $\{x_{n_k}\}_{k\in \mathbb{N}}$ such that $ \Vert x_{n_k} - x_0 \Vert \rightarrow \delta(x_0,S)~\mbox{as}~k\rightarrow \infty.$  Thus $\{x_{n_k}\}_{k\in \mathbb{N}}$ is a maximizing sequence in $S.$ So, by $M$-compactness $\{x_{n_k}\}_{k\in \mathbb{N}}$ has a convergent subsequence and hence $\{x_{n}\}_{n\in \mathbb{N}}$ has a convergent subsequence.

Conversely, let $S$ is $\mathcal{I}$-$M$-compact. Also, let $\{x_{n}\}_{n\in \mathbb{N}}$ be a  maximizing sequence in $S.$ Then there exists $x_0 \in X$ such that $\Vert x_n - x_0 \Vert \rightarrow \delta(x_0,S)~\mbox{as}~n\rightarrow \infty.$ Then by Remark \ref{ag}, $\mathcal{I}$-lim $ \Vert x_n - x_0 \Vert = \delta(x_0,S).$ So, by $\mathcal{I}-M$ compactness, $\{x_{n}\}_{n\in \mathbb{N}}$ has a convergent subsequence $\{x_{n_k}\}_{k\in \mathbb{N}}.$ Hence $S$ is $M$-compact.
\end{proof}


\begin{proposition}
If 	$A$ is $\mathcal{I}$-$M$-compact then $\bar{A}  $ is $\mathcal{I}$-$M$-compact.
\end{proposition}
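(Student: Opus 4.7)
The plan is to approximate any $\mathcal{I}$-maximizing sequence in $\bar{A}$ by a sequence drawn from $A$ itself and then transfer the convergent subsequence supplied by the $\mathcal{I}$-$M$-compactness of $A$. The first observation I would record is that $\delta(x, A) = \delta(x, \bar{A})$ for every $x \in X$; this follows from continuity of $y \mapsto \|x - y\|$ together with density of $A$ in $\bar{A}$, so the quantity being $\mathcal{I}$-chased is the same for the two sets.

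Given an $\mathcal{I}$-maximizing sequence $\{z_n\} \subset \bar{A}$ together with $x \in X$ such that $\mathcal{I}$-lim $\|z_n - x\| = \delta(x, \bar{A})$, I would, for each $n$, choose $x_n \in A$ with $\|z_n - x_n\| < 1/n$. The reverse triangle inequality gives $|\|x_n - x\| - \|z_n - x\|| \leq \|x_n - z_n\| < 1/n$, so the difference $\|x_n - x\| - \|z_n - x\|$ is an ordinary null sequence. Since $\mathcal{I}$ is (in the standing context of this section) non-trivial and admissible, every finite subset of $\mathbb{N}$ lies in $\mathcal{I}$, hence every usual null real sequence is $\mathcal{I}$-null. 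Applying Lemma \ref{I convergence sum product}(1) to the decomposition $\|x_n - x\| = \|z_n - x\| + (\|x_n - x\| - \|z_n - x\|)$ then yields $\mathcal{I}$-lim $\|x_n - x\| = \delta(x, A)$, so $\{x_n\}$ is $\mathcal{I}$-maximizing in $A$.

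By the assumed $\mathcal{I}$-$M$-compactness of $A$, the sequence $\{x_n\}$ has a subsequence $\{x_{n_k}\}$ converging to some $y \in X$. Since $\|z_{n_k} - x_{n_k}\| < 1/n_k \to 0$, the corresponding subsequence $\{z_{n_k}\}$ also converges to $y$, and $y \in \bar{A}$ because $\bar{A}$ is closed. This produces a convergent subsequence of the original $\mathcal{I}$-maximizing sequence in $\bar{A}$, establishing that $\bar{A}$ is $\mathcal{I}$-$M$-compact.

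The main obstacle is the step converting the norm-null quantity $\|x_n - z_n\|$ into an $\mathcal{I}$-null one: this hinges on $\mathcal{I}$ containing all finite subsets of $\mathbb{N}$, which is exactly admissibility and is the standing hypothesis elsewhere in this section (cf.\ Remark \ref{ag} and Proposition \ref{ah}). Without admissibility, the naive $1/n$-approximation would not automatically lift to an $\mathcal{I}$-maximizing sequence in $A$, and a more delicate, ideal-sensitive selection of the approximants $x_n$ would be required.
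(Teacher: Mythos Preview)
Your argument is correct and follows the same overall route as the paper: approximate the $\mathcal{I}$-maximizing sequence in $\bar A$ by one drawn from $A$, show the approximant is again $\mathcal{I}$-maximizing, invoke the hypothesis on $A$, and transfer the convergent subsequence back through the approximation bound. The paper carries out the middle step differently: it fixes $\varepsilon>0$, chooses $y_n\in A$ with $\|y_n-x_n\|<\varepsilon/2$, and uses the hereditary property of $\mathcal{I}$ via the inclusion
\[
\bigl\{n:\bigl|\|y_n-x\|-\delta(x,A)\bigr|\ge\varepsilon\bigr\}\subseteq\bigl\{n:\bigl|\|x_n-x\|-\delta(x,A)\bigr|\ge\tfrac{\varepsilon}{2}\bigr\}\in\mathcal{I},
\]
rather than routing through Lemma~\ref{I convergence sum product} and admissibility. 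As literally written the paper's $\{y_n\}$ depends on $\varepsilon$, so to obtain a \emph{single} approximating sequence valid for all $\varepsilon$ one is in effect forced into your $1/n$-scheme; you are then right that admissibility is what disposes of the finitely many indices with $1/n\ge\varepsilon/2$. In short, your version makes explicit the hypothesis the paper's argument tacitly needs, and your closing remark about this being the genuine obstacle is on point.
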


\begin{proof}
Let $\{x_{n}\}_{n\in \mathbb{N}}$ be a $\mathcal{I}$-maximizing sequence in $\bar{A}.$ Then there exists $x\in X$ such that $\Vert x_n - x \Vert$ is $\mathcal{I}$-convergent to $\delta(x,\bar{A}).$ Let $\varepsilon>0$ .Then there exists sequence $\{y_{n}\}_{n\in \mathbb{N}}$ in $A$ such that $\Vert y_n -x_n \Vert < \frac{\varepsilon}{2}$  $\forall~ n \in \mathbb{N}.$ Now,

$$ \Big| \Vert y_n - x \Vert - \delta(x,A) \Big|\leq \Big| \Vert y_n - x \Vert - \Vert x_n - x \Vert \Big| + \Big| \Vert x_n - x \Vert - \delta(x,\bar{A}) \Big| $$ 

$$\Longrightarrow  \Big| \Vert y_n - x \Vert - \delta(x,A) \Big| < \frac{\varepsilon}{2} + \Big| \Vert x_n - x \Vert - \delta(x,A) \Big|.$$ 

Then 
$$\Big\{ n \in \mathbb{N} :  \Big| \Vert y_n - x \Vert - \delta(x,A) \Big| \geq \varepsilon \Big\}\subseteq \Big\{ n \in \mathbb{N} : \Big| \Vert x_n - x \Vert - \delta(x,A) \Big| \geq \frac{\varepsilon}{2} \Big\}\in \mathcal{I}, $$
$$\Longrightarrow \Big\{ n \in \mathbb{N} :  \Big| \Vert y_n - x \Vert - \delta(x,A) \Big| \geq \varepsilon \Big\}\in \mathcal{I}.$$
	
Thus, $\{y_{n}\}_{n\in \mathbb{N}}$ is an $\mathcal{I}$-maximizing sequence in $A$. Since $A$ is $\mathcal{I}$-$M$-compact so there exists a subsequence $\{y_{n_k}\}_{k\in \mathbb{N}}$ of $\{y_{n}\}_{n\in \mathbb{N}}$ such that $y_{n_k}$ $\rightarrow y_0 \in A$ as $k\rightarrow \infty.$ So, there exists  $k_0 \in \mathbb{N}$ such that $\Vert y_{n_k} - y_0 \Vert < \frac{\varepsilon}{2}$   $\forall~ k \geq k_0.$ Thus, $\forall k \geq k_0,$ 
	
$$\Vert x_{n_k} - y_0 \Vert \leq \Vert x_{n_k} - y_{n_k} \Vert + \Vert y_{n_k} - y_0 \Vert < \frac{\varepsilon}{2} + \frac{\varepsilon}{2}< \varepsilon.$$
	
So, $x_{n_k}\rightarrow y_0$ ($\in A \subseteq \bar{A}$) as $k \rightarrow \infty.$ Hence, $\bar{A}$ is $\mathcal{I}$-$M$-compact.
\end{proof}

\begin{remark}
Converse of the above Proposition is not true since $M$-compactness and $\mathcal{I}-M$ compact coincide for admissible ideals and it is known that $M$-compactness of $\bar{A}$ may  not imply $M$-compactness of $A.$

\end{remark}

We now give an example to show that $\mathcal{I}$-$M$-compact set may not be closed.

\begin{example}
We take $\mathcal{I}$ = $\mathcal{I}_{Fin}$ (class of all finite subsets of $\mathbb{N}$). Then $M$-compact coincides with $\mathcal{I}-M$ compact.
Now since in $\mathbb{R}$,  [-1,0)$\cup$(0,1] is $M$-compact so it is also $\mathcal{I}-M$ compact but it is not closed.
\end{example}

Now in next two different examples, we show that in general, $\mathcal{I}$-$M$-compactness and $M$-compactness are two independent notions. 

\begin{example} \label{ai}
 
Let $A=[-1,1]\setminus \{0\},$ $\mathcal{I}_M=2^M$  where $M = \{ 5,6,7,\dots\}.$

Define, $\{x_{n}\}_{n\in \mathbb{N}}$ in $A$ as,

\begin{equation*}
~x_{n}=
\begin{cases}
1  ~\mbox{if} ~~ n=1,2,3,4;  ~&\\
\frac{1}{n} ~\mbox{if} ~n\geq 5.
\end{cases}
\end{equation*}

Then $\vert x_n \vert$ = 1 $\forall$ $n \in \mathbb{N} \setminus M$ and thus  $\{\vert x_{n}\vert \}_{n\in \mathbb{N}}$ $\mathcal{I}_M-$converges to 1 = $\delta(0,A)$. So, $\{\vert x_{n}\vert \}_{n\in \mathbb{N}}$ is $\mathcal{I}_M-$maximizing sequence in $A$ . But clearly $\{\vert x_{n}\vert \}_{n\in \mathbb{N}}$ has no convergent subsequence in $A$. But, $A$ is $M$-compact. Let $\{x_{n}\}_{n\in \mathbb{N}}$ be a maximizing sequence in $A$. Then there exists $x\in \mathbb{R}$ such that 
$$\vert x_n -x \vert \rightarrow \delta(x,A)= \delta(x,\bar{A}).$$ 
Thus $\{x_{n}\}_{n\in \mathbb{N}}$ is maximizing sequence in $\bar{A}$. Since $\bar{A}$ is compact then it is $M$-compact and hence $\{x_{n}\}_{n\in \mathbb{N}}$ has a convergent subsequence $\{x_{n_k}\}_{k\in \mathbb{N}}.$ So, let $x_{n_k}\rightarrow x_0 \in \bar{A}$ as $k \rightarrow \infty.$ 
Now, $$\vert x_{n_k} -x \vert \rightarrow \vert x_0  -x \vert.$$ 
So, $\vert x_0  -x \vert$ = $\delta(x,\bar{A}).$ Since $A$ is remotal, so there exists $x_1 \in A$ such that $\vert x_1 - x \vert = \delta(x,A)=\delta(x,\bar{A})$. If possible let, $x_0 \notin A.$ Then $x_0, x_1 \in \bar{A}$ such that $x_0 \neq x_1$ and $\vert x_1 - x\vert = \delta(x,\bar{A})= \vert x_0 - x\vert$. Thus, $x=0$ [as the farthest point map, $F(x,\bar{A})$ is singleton $\forall x \neq 0$] and so $\{ x_0, x_1 \}$ = $\{ 1, -1 \}.$ Thus, $x_0$ = 1 or -1, which is a contradiction, since $1, -1$ both in $A.$ 
Thus $x_0 \in A$ and hence $A$ is $M$-compact.
\end{example}

\begin{example} \label{aj}
Let $A=[-1 , 1),$ $\mathcal{I} _P$ = $2^P$  where $P=\{ 2,4,6,\dots\}.$ Here $A$ is not $M$-compact. Indeed, $\{1- \frac{1}{n}\}_{n\in \mathbb{N}}$ is a maximizing sequence for in $A$ but it has no convergent subsequence in $A$.
But $A$ is $\mathcal{I}_P$-$M$-compact. Let $\{x_{n}\}_{n\in \mathbb{N}}$ be a $\mathcal{I}_P$-maximizing sequence in $A$. Then there exists $x\in \mathbb{R}$ such that $\vert x_n - x\vert$ is  $\mathcal{I}_P$-convergent to $\delta(x,A).$
Thus, $\vert x_n - x\vert$ = $\delta(x,A)$  $\forall n \notin P$  $\Longrightarrow$  $\vert x_n - x\vert=\delta(x,A)$  $\forall n \in \{1,3,5,7,\dots\}.$ Claim, $x \geq0.$ If not, let $x<0.$ Then $\delta(x,A)$ =  $\vert x - 1\vert$.  Now, $\vert x_n - x\vert$ $<$ $\vert x - 1\vert$ = $\delta(x,A)$ , which is a contradiction. So, our claim is true.
Thus, $\delta(x,A)$ = $\vert x - (-1)\vert$ = $\vert x + 1\vert.$ Hence, $\vert x_n - x\vert$ = $\vert x + 1\vert$ and $x_n \in A ~\forall~ n \in \{1,3,5,7,\dots\}$ So, $x_n = -1~  \forall~ n \in \{1,3,5,7,\dots\}.$ So, $\{x_{2n+1}\}_{n\in \mathbb{N}}$ is a convergent subsequence of $\{x_{n}\}_{n\in \mathbb{N}}.$
\end{example}

\begin{remark}
Example \ref{aj} also shows that there exists a set, namely, $[-1,1),$ which is not closed as well as not $M$-compact  but is $\mathcal{I}$-$M$-compact. Also, In this case ideal $\mathcal{I}\neq \mathcal{I}_{Fin}.$ 
\end{remark}

\section{ A Variation}
We introduce another form of compactness.

\begin{definition}
$A \subseteq X$ is $\mathcal{I}$-$\mathcal{I}$-$M$-compact if every $\mathcal{I}$-maximizing sequence in $A$ has a $\mathcal{I}$-convergent subsequence in $A.$
\end{definition}

\begin{remark}
From Remark $\ref{ag}$ and Proposition $\ref{ah},$  it is clear that if $\mathcal{I}$ is a non-trivial admissible ideal on $\mathbb{N}$ then $M$-compactness, $\mathcal{I}$-$M$-compactness and $\mathcal{I}$-$\mathcal{I}$-$M$-compactness are all equivalent.
\end{remark}

Next, we give an example to show that compactness does imply $\mathcal{I}$-$\mathcal{I}$-$M$-compact.

\begin{example}
Let $A=[-1,1].$ Clearly $A$ is compact. Let, $\mathcal{I}_M=2^M$ where $M=\{ 5,6,7,\dots\}.$ 
Define $\{x_{n}\}_{n\in \mathbb{N}}$ in $A$ as,
\begin{equation*}
~x_{n}=
\begin{cases}
1   ~\mbox{if} ~~ n=1,2,  ~&\\
-1  ~\mbox{if} ~~ n=3,4,  ~&\\
\frac{1}{n}  ~\mbox{if} ~n\geq 5.
\end{cases}
\end{equation*}
Then $\{\vert x_{n}\vert \}_{n\in \mathbb{N}}$ is $\mathcal{I}_M-$convergent to $1= \delta(0,A).$  and so $\{x_{n}\}_{n\in \mathbb{N}}$ is $\mathcal{I}_M$-maximizing in $A.$ 

Claim: $\{x_{n}\}_{n\in \mathbb{N}}$ has no $\mathcal{I}_M$-convergent subsequence in $A$. If not, let $\{$ $x_{n_k}$ $\}_{k\in \mathbb{N}}$ be an $\mathcal{I}_M$-convergent subsequence of $\{x_{n}\}_{n\in \mathbb{N}}$ which is $\mathcal{I}_M$-convergent to $x.$ Then $x_{n_k}=x~~ \forall~ k \in \mathbb{N} \setminus M \Longrightarrow x_{n_k}=x~~\forall~ k=1,2,3,4,$ a contradiction, since $\{x_{n}\}_{n\in \mathbb{N}}$ cannot have a subsequence of this form.
\end{example}

\begin{remark}
The above example also shows that $\mathcal{I}$-$M$-compact does not imply $\mathcal{I}$-$\mathcal{I}$-$M$-compact. Indeed, $A=[-1,1]$ is also $\mathcal{I}$-$M$-compact.
\end{remark}

\begin{proposition}
If $A$ is $\mathcal{I}$-$\mathcal{I}$-$M$-compact then $\bar{A}$ is $\mathcal{I}$-$\mathcal{I}$-$M$-compact.
\end{proposition}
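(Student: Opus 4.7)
The plan is to mirror the earlier closure-stability argument for $\mathcal{I}$-$M$-compactness, extracting an $\mathcal{I}$-convergent rather than a norm-convergent subsequence at the crucial step. Start with an $\mathcal{I}$-maximizing sequence $\{x_n\}_{n\in\mathbb{N}}$ in $\bar{A}$, so some $x\in X$ has $\|x_n - x\|$ $\mathcal{I}$-convergent to $\delta(x,\bar{A}) = \delta(x,A)$. For each $n$ I pick $y_n \in A$ with $\|y_n - x_n\|$ small (the paper's earlier closure proof uses $\|y_n - x_n\| < \varepsilon/2$ for a fixed $\varepsilon$; I prefer $\|y_n - x_n\| < 1/n$ to avoid $\varepsilon$-dependence). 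A triangle-inequality computation then yields
\[
\bigl\{n : \bigl|\|y_n - x\| - \delta(x,A)\bigr| \geq \varepsilon\bigr\} \subseteq \bigl\{n : 1/n \geq \varepsilon/2\bigr\} \cup \bigl\{n : \bigl|\|x_n - x\| - \delta(x,A)\bigr| \geq \varepsilon/2\bigr\},
\]
so $\{y_n\}_{n\in\mathbb{N}}$ is $\mathcal{I}$-maximizing in $A$.

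Next, I invoke the hypothesis that $A$ is $\mathcal{I}$-$\mathcal{I}$-$M$-compact to obtain an $\mathcal{I}$-convergent subsequence $\{y_{n_k}\}_{k\in\mathbb{N}}$ with $\mathcal{I}$-limit $y_0 \in A \subseteq \bar{A}$. This is the one place where the present proof genuinely diverges from the norm-topology closure argument: we trade the norm-convergent subsequence for an $\mathcal{I}$-convergent one, and it is the closedness of $A$'s ideal-limits that matters rather than sequential compactness in the usual sense.

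Finally, I verify that $\{x_{n_k}\}_{k\in\mathbb{N}}$ is $\mathcal{I}$-convergent to $y_0$ via
\[
\bigl\{k : \|x_{n_k} - y_0\| \geq \eta\bigr\} \subseteq \bigl\{k : \|x_{n_k} - y_{n_k}\| \geq \eta/2\bigr\} \cup \bigl\{k : \|y_{n_k} - y_0\| \geq \eta/2\bigr\},
\]
noting that under $\|y_n - x_n\| < 1/n$ the first set is finite and hence in $\mathcal{I}$ (by the admissibility of $\mathcal{I}$ tacitly in force throughout this section), while the second lies in $\mathcal{I}$ by choice of $y_0$. The main obstacle is precisely this simultaneous control of both pieces: the earlier closure proposition dealt with it by fixing $\varepsilon$ ahead of time, and the cleanest remedy, as outlined above, is to make $\|y_n - x_n\| \to 0$ in norm, so that the approximation error is no longer tied to a single $\varepsilon$ and the passage from $\mathcal{I}$-convergence of $\{y_{n_k}\}$ to $\mathcal{I}$-convergence of $\{x_{n_k}\}$ works uniformly in $\eta$.
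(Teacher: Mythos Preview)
Your overall strategy mirrors the paper's exactly: approximate each $x_n\in\bar A$ by some $y_n\in A$, show $\{y_n\}$ is $\mathcal{I}$-maximizing in $A$, extract an $\mathcal{I}$-convergent subsequence $\{y_{n_k}\}$ with limit $y_0\in A$, then transfer $\mathcal{I}$-convergence back to $\{x_{n_k}\}$. Your choice $\|y_n-x_n\|<1/n$ in place of the paper's $\|y_n-x_n\|<\varepsilon/2$ is a genuine tightening: in the paper's version the sequence $\{y_n\}$ depends on the single $\varepsilon$ fixed at the outset, so its claim that $\{y_n\}$ is $\mathcal{I}$-maximizing (a condition quantified over \emph{all} $\varepsilon>0$) is not literally justified, and the final transfer step is left as ``it can be verified''. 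Your $1/n$ device removes that dependence cleanly.

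The gap in your argument is the appeal to ``admissibility of $\mathcal{I}$ tacitly in force throughout this section.'' It is not: the proposition carries no such hypothesis, and the surrounding section works explicitly with non-admissible ideals such as $\mathcal{I}_M=2^M$ for $M=\{5,6,7,\dots\}$. You invoke admissibility twice---once to place the finite set $\{n:1/n\ge\varepsilon/2\}$ in $\mathcal{I}$ when proving $\{y_n\}$ is $\mathcal{I}$-maximizing, and once to place $\{k:1/n_k\ge\eta/2\}$ in $\mathcal{I}$ when passing from $\mathcal{I}$-convergence of $\{y_{n_k}\}$ to that of $\{x_{n_k}\}$. Without admissibility, finite sets need not lie in $\mathcal{I}$ and both inclusions fail to deliver the desired membership. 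So your proof is correct only under the added hypothesis that $\mathcal{I}$ is admissible; as the proposition is stated, the argument is incomplete (and, in fairness, the paper's own proof has an analogous difficulty hidden in its $\varepsilon$-dependent construction of $\{y_n\}$).
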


\begin{proof}
Let $\{x_{n}\}_{n\in \mathbb{N}}$ be a $\mathcal{I}$-maximizing sequence in $\bar{A}.$ Then there exists $x \in X$ such that $\Vert x_n - x \Vert$ is $\mathcal{I}$-convergent to $\delta(x,\bar{A}).$ Let $\varepsilon>0.$ Then there exists sequence $\{y_{n}\}_{n\in \mathbb{N}}$ in $A$ such that $\Vert y_n -x_n \Vert < \frac{\varepsilon}{2}~\forall~ n \in \mathbb{N}.$ Now,
$$ \Big| \Vert y_n - x \Vert - \delta(x,A) \Big|\leq \Big| \Vert y_n - x \Vert - \Vert x_n - x \Vert \Big| + \Big| \Vert x_n - x \Vert - \delta(x,\bar{A}) \Big| $$ 

$$\Longrightarrow  \Big| \Vert y_n - x \Vert - \delta(x,A) \Big| < \frac{\varepsilon}{2} + \Big| \Vert x_n - x \Vert - \delta(x,A) \Big|.$$ 

Then 
$$\Big\{ n \in \mathbb{N} :  \Big| \Vert y_n - x \Vert - \delta(x,A) \Big| \geq \varepsilon \Big\}\subseteq \Big\{ n \in \mathbb{N} : \Big| \Vert x_n - x \Vert - \delta(x,A) \Big| \geq \frac{\varepsilon}{2} \Big\}\in \mathcal{I}, $$

$$\Longrightarrow \Big\{ n \in \mathbb{N} :  \Big| \Vert y_n - x \Vert - \delta(x,A) \Big| \geq \varepsilon \Big\}\in \mathcal{I}.$$

Thus, $\{y_{n}\}_{n\in \mathbb{N}}$ is $\mathcal{I}$-maximizing in $A$. Since $A$ is $\mathcal{I}$-$\mathcal{I}$-$M$-compact, so, there exists a subsequence $\{y_{n_k}\}_{k\in \mathbb{N}}$ of $\{y_{n}\}_{n\in \mathbb{N}}$ such that $\{y_{n_k}\}_{k\in \mathbb{N}}$ is $\mathcal{I}$-convergent to $y_0 \in A.$ 
%
%
Now it can be verified that, $\{x_{n_k}\}_{k\in \mathbb{N}}$ is $\mathcal{I}$-convergent to $y_0 \in A$ $\subseteq \bar{A}$ and hence  $\bar{A}$ is $\mathcal{I}$-$\mathcal{I}$-$M$-compact.
\end{proof}

\begin{remark}
Converse of the above theorem is not be true as it is not true for $M$-compact.
\end{remark}
\begin{Acknowledgement}
The first author's research is funded by the National Board for Higher Mathematics (NBHM), Department of Atomic Energy (DAE), Government of India, Ref No: 0203/11/2019-R$\&$D-II/9249.
The second author's research is funded by the Council of Scientific and Industrial Research (CSIR), Government of India under the Grant Number: $25(0285)/18/\mbox{EMR-II}$.
\end{Acknowledgement}

\end{document}